\newtheorem{remark}{Remark}
\newtheorem{example}{Example}
\newtheorem{proposition}{Proposition}
\newtheorem{theorem}{Theorem}
\newtheorem{problem}{Problem}
\newtheorem{definition}{Definition}
\newcommand{\mat}[2]{\left [ \begin{array}{#1} #2 \end{array}\right ]}
\newcommand{\Hr}{\hat{H}}
\newcommand{\Is}{\mathcal{I}^n}
\newcommand{\Irs}{\mathcal{I}_r^n}
\newcommand{\Irso}{\mathcal{\hat{I}}}
\newcommand{\Es}{\mathcal{E}}
\def\blfootnote{\xdef\@thefnmark{}\@footnotetext}
\crefname{section}{section}{sections}
\crefname{subsection}{subsection}{subsections}
\Crefname{section}{Section}{Sections}
\Crefname{subsection}{Subsection}{Subsections}
\crefname{problem}{problem}{problems}
\Crefname{figure}{Figure}{Figures}
\begin{document}

\title{Optimal Modal Truncation}
\author{Pierre Vuillemin$^\dagger$, Adrien Maillard$^\ddagger$ and Charles Poussot-Vassal$^\dagger$\\
{\small
$^\dagger$ONERA / DTIS, Universit\'e de Toulouse, F-31055 Toulouse, France
}\\
{\small \texttt{pierre.vuillemin@onera.fr}, \texttt{charles.poussot-vassal@onera.fr}
}\\
{
\small $^\ddagger$ Jet Propulsion Laboratory, California Institute of Technology, Pasadena, CA 91109, USA
}\\
{
\small \texttt{adrien.maillard@jpl.nasa.gov}
}
}

\date{}

\maketitle

\begin{abstract}
This paper revisits the modal truncation from an optimisation point of view. In particular, the concept of dominant poles is formulated with respect to different systems norms as the solution of the associated optimal modal truncation problem. The latter is reformulated as an equivalent convex integer or mixed-integer program. Numerical examples highlight the concept and optimisation approach.

\end{abstract}

\begin{slshape}
   Keywords: Model approximation, modal truncation, mixed optimisation
\end{slshape}

\blfootnote{Submitted. Preprint version September 2020.}
\blfootnote{\textbf{Funding. }Part of this research was carried out at the Jet Propulsion Laboratory, California Institute of Technology, under a contract with the National Aeronautics and Space Administration (80NM0018D0004).}

\section{Introduction}

Large-scale dynamical models often arise in the industry due to the inherent complexity of the systems or phenomena to be studied and the complexity induced by the processes and tools  used for their modelling (e.g. Finite Element Methods, etc.). The dimension of these dynamical models then translates into a high numerical and computational burden that can prevent from performing simulation, analysis, control or optimisation. Model approximation is meant to alleviate the issue by building a much smaller model catching the main dynamics of the initial one and that could be used instead. In this context, this article is aimed at improving a standard linear model approximation method, the modal truncation, by adding considerations based on usual systems norms minimisation.

Let us consider consider a Linear Time-Invariant (LTI) dynamical model $H$ represented by its state-space realisation,
\begin{equation}
    \begin{array}{rcl}
    \dot{x}(t)&=& A x(t) + B u (t) \\
    y(t)&=& C x(t) + D u(t)
    \end{array}
    \label{eq:H}
\end{equation}
where $u(t) \in \mathbb{R}^{n_u}$ is the control input, $x(t) \in \mathbb{R}^n$ the internal state, $y(t) \in \mathbb{R}^{n_y}$ the output and $A$, $B$, $C$, $D$ are real matrices of adequate dimensions. Generally speaking, the objective of model approximation consists in finding a LTI model $\hat{H}$ described by 
\begin{equation}
    \begin{array}{rcl}
    \dot{\hat{x}}(t)&=& \hat{A} \hat{x}(t) + \hat{B} u (t) \\
    \hat{y}(t)&=& \hat{C} \hat{x}(t) + \hat{D} u(t)
    \end{array}
    \label{eq:Hr}
\end{equation}
where the number of input and outputs remains unchanged but the dimension of the state is decreased, i.e. $\hat{x}(t) \in \mathbb{R}^r$ with $r \ll n $, and such that the input to output behaviour of $\hat{H}$ is close to the one of $H$ in some sense.

For sake of simplicity the state-space representations \eqref{eq:H} and \eqref{eq:Hr} of $H$ and $\hat{H}$ are used indistinctly from their transfer functions, defined as follows,
\begin{equation}
    \begin{array}{rrcl}
    H:& \mathbb{C}\setminus \rho(A)&\to&\mathbb{C}^{n_y \times n_u}\\
    & s &\to&H(s) = C (s I - A)^{-1} B + D
    \end{array}
\end{equation}
where $\rho(A) = \{ \lambda \in \mathbb{C} / det(\lambda I - A) = 0 \}$ is the set that contains the eigenvalues of $A$.

Linear model approximation has been widely studied and several methods are now available. See e.g. \cite{antoulas:2005:approximation} for an overview of classical approaches and \cite{benner2017model,antoulas:2020:interpolatory} for a more recent treatment of the topic. Among the classical approaches, the modal truncation consists in projecting the large-scale model $H$ onto its dominant eigenspace. While the method is not the most efficient in general, it remains widely used in practice by engineers due to its conceptual simplicity and the fact that it preserves some poles of the initial model which are quantities of physical interest. Besides, it may be efficient enough to produce faithful reduced-order models, especially with poorly damped systems such as flexible structures.

In this context, the objective of this paper is to revisit the modal truncation from an optimisation point of view. In particular, usual systems norms are used to define specific dominant poles so that the resulting reduced-model is the best among all the possible models obtained by modal truncation with respect to the chosen norm. It is shown how this problem translates naturally into (binary) Integer Programming (IP) or Mixed Integer Programming (MIP). 
As the resulting problems share some similarities, various norms are considered: the $\mathcal{H}_2$ norm, its time and frequency-limited versions and the $\mathcal{H}_\infty$ norm.

The mandatory concepts and tools in systems theory and approximation are recalled in \cref{sec:prelim}, especially elements concerning the norms of systems that are considered in the article and about the modal approximation algorithm. Then in \cref{sec:contrib}, the latter is reformulated into different optimisation problems depending on the considered norm. Numerical applications are presented in \cref{sec:app} to highlight the concepts of the approach. Finally, concluding remarks are exposed in \cref{sec:ccl} together with some insights into possible extensions of this work.

Note that throughout this article, the following hypothesis are considered: 
\begin{itemize}
\item The model $H$ is assumed to be stable, i.e. $\rho(A)$ is contained within the open left complex plane $\mathbb{C}_-$. Indeed, should it have unstable poles, then they should be kept in the reduced-order model $\hat{H}$ anyway to have a bounded input-output approximation error.
\item The standard modal truncation approach requires the full eigenvalue decomposition of the matrix $A$ which involves $\mathcal{O}(n^3)$ dense linear algebra operations. Therefore the dimension $n$ of the state should remain moderate in practice. For iterative dominant eigenspaces computation, see \cite{Rommes:2006:subspaceAcc} and references therein.
\item Similarly, one assumes that $A$ has only semi-simple eigenvalues. While the approach could theoretically be extended in presence of Jordan blocks, their computation is ill-conditioned and could hardly be achieved in practice for non-trivial cases.
\end{itemize}
It should also be noted that a matrix $E \in \mathbb{R}^{n\times n}$ multiplying $\dot{x}$ in \eqref{eq:H} could be considered without affecting the reminder of the article. Indeed, should the matrix $E$ be non-singular, then it could be inverted to fall back on a system of the form \eqref{eq:H}. Otherwise, the associated transfer function would contain a polynomial part which elements of order larger or equal to $1$ should be kept for the truncation error to be finite, similarly to the unstable case.

\paragraph{Notations}Let us denote by $\mathbb{R}$ the set of real numbers, $\mathbb{R}_+$ the subset of positive numbers, $\mathbb{C}$ the set of complex numbers, $\mathbb{S}_{++}^n$ the set of symmetric and positive definite matrices of size $n$, $\mathcal{L}_2(I)$ the Lebesgues space of square integrable functions on $I$ ($\mathbb{R}$ if not specified). Given a complex valued matrix $M$, $M^T$ denotes its transpose, $M^*$ its conjugate and $M^H$ its conjugate transpose, $\Vert M \Vert_F$ is its Frobenius norm, $[M]_{i,j}$ is its $i,j$-th element, $tr(M)$ is its trace, $vec(M)$ is the vector obtained by concatenating the columns of $M$. Considering $z \in \mathbb{C}$, $Re(z)$ and $Im(z)$ are its real and imaginary parts, respectively, $\imath = \sqrt{-1}$ is the imaginary unit. Floor and ceiling functions are denoted by $\lfloor \cdot \rfloor$ and $\lceil \cdot \rceil$, respectively.

\section{Preliminaries}
\label{sec:prelim}

\subsection{Diagonal canonical form}
This Diagonal Canonical Form (DCF), given in \cref{prop:DCF}, appears naturally throughout the modal approximation and allows to simplify the expression of usual systems norms as recalled in the next section. See \cref{rq:H_practical_decomp} for its practical computation.

\begin{proposition}[Diagonal Canonical Form]
\label{prop:DCF}
Let us denote by $\Is= \{i=1,\ldots,n\}$ the set of integers ranging from $1$ to $n$. Provided that the dynamic matrix $A$ of $H$ in \eqref{eq:H} has semi-simple eigenvalues $\lambda_i \in \mathbb{C}$ ($i \in \Is$), then the transfer function associated with the system can be decomposed as follows,
\begin{equation}
    H(s) = D + \sum_{i\in \Is} \frac{\Phi_i}{s - \lambda_i},
    \label{eq:H_decomp}
\end{equation}
where $\Phi_i \in \mathbb{C}^{n_y \times n_u}$ is the residue associated with the pole $\lambda_i$.
\end{proposition}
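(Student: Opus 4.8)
The plan is to diagonalize $A$ via its eigenvector basis and push the change of coordinates through the transfer function formula. Since $A$ has semi-simple eigenvalues $\lambda_i$ ($i\in\Is$), there exists an invertible matrix $V \in \mathbb{C}^{n\times n}$ whose columns are eigenvectors of $A$ such that $V^{-1} A V = \Lambda := \mathrm{diag}(\lambda_1,\ldots,\lambda_n)$. Applying the state-space coordinate change $x = V\tilde{x}$ leaves the transfer function invariant, so
\begin{equation}
H(s) = C(sI-A)^{-1}B + D = (CV)\,(sI-\Lambda)^{-1}(V^{-1}B) + D.
\end{equation}

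Next I would exploit the fact that $(sI-\Lambda)^{-1}$ is diagonal with entries $1/(s-\lambda_i)$. Writing $\tilde{C} := CV \in \mathbb{C}^{n_y\times n}$ with columns $\tilde{c}_i$ and $\tilde{B} := V^{-1}B \in \mathbb{C}^{n\times n_u}$ with rows $\tilde{b}_i^T$, the matrix product expands as a sum over the diagonal index:
\begin{equation}
(CV)(sI-\Lambda)^{-1}(V^{-1}B) = \sum_{i\in\Is} \frac{\tilde{c}_i \tilde{b}_i^T}{s-\lambda_i}.
\end{equation}
Setting $\Phi_i := \tilde{c}_i \tilde{b}_i^T \in \mathbb{C}^{n_y\times n_u}$ then yields the claimed decomposition \eqref{eq:H_decomp}, with $\Phi_i$ the (rank-one) residue of $H$ at the simple pole $\lambda_i$. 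One should also note $\Phi_i = \lim_{s\to\lambda_i}(s-\lambda_i)H(s)$, which identifies it intrinsically as the residue and confirms it does not depend on the particular choice of eigenvector scaling.

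There is no serious obstacle here; the statement is essentially the partial-fraction expansion of a rational matrix function with only simple poles. The only point requiring a modicum of care is justifying that semi-simplicity of every eigenvalue is exactly what guarantees $A$ is diagonalizable over $\mathbb{C}$ (algebraic multiplicity equals geometric multiplicity for each $\lambda_i$), so that the eigenvector matrix $V$ is invertible; repeated eigenvalues are allowed provided each has a full complement of independent eigenvectors, in which case the sum in \eqref{eq:H_decomp} simply groups several identical $\lambda_i$. If one prefers, the residues attached to a repeated eigenvalue $\lambda$ may be summed into a single term $(\sum_{i:\lambda_i=\lambda}\Phi_i)/(s-\lambda)$, but keeping them separate as indexed by $\Is$ matches the form stated in the proposition.
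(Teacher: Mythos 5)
Your proposal is correct and follows essentially the same route as the paper, which (in \cref{rq:H_practical_decomp}) likewise diagonalises $A$ via its eigenvector matrix, applies the state coordinate change, and reads off the rank-one residues $\Phi_i = c_i b_i^T$ from the columns of $CX$ and rows of $X^{-1}B$. Your added remarks on semi-simplicity versus diagonalisability and on grouping repeated eigenvalues are consistent with the paper's assumptions and do not change the argument.
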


\begin{remark}
\label{rq:H_practical_decomp}
From a practical point of view, the decomposition \eqref{eq:H_decomp} is obtained by computing the full eigenvalue decomposition,
\begin{equation}
    A X = X \Delta 
\end{equation}
where $\Delta = diag(\lambda_1,\ldots,\lambda_n)$ and $X\in \mathbb{C}^{n\times n}$ contains the right eigenvectors of $A$. Then by applying the change of variable $ x(t) = X \xi(t)$, the dynamic \eqref{eq:H} becomes,
\begin{equation}
    \small
    \dot{\xi}(t) = \underbrace{X^{-1} A X}_{\Delta} \xi(t) + \underbrace{X^{-1} B}_{B_\Delta} u(t), \quad y(t) = \underbrace{CX}_{C_\Delta} \xi(t) + Du(t),
\end{equation}
and the residues are thus given, for $i\in \Is$, as
\begin{equation}
    \Phi_i = C_\Delta e_i e_i^T  B_\Delta = c_i b_i^T.
\end{equation}
Note that the residues are written as the outer product of two vectors $c_i \in \mathbb{C}^{n_y}$ and $b_i \in \mathbb{C}^{n_u}$ corresponding to the columns (resp. rows) of $C_\Delta$ (resp. $B_\Delta$) thus ensuring that $rank(\Phi_i) = 1$ and that the right hand side of \eqref{eq:H_decomp} is indeed of order $n$.
\end{remark}

\subsection{Systems norms}
This section is aimed at recalling some key elements concerning norms of LTI systems that are useful for this article. For a more exhaustive introduction to the topic, interested readers may refer to \cite[chap.4]{zhou:1995:robust} and references therein.

In particular, the $\mathcal{H}_2$-norm, its frequency and time limited counterparts are given in \cref{def:h2}, \cref{def:h2w} and \cref{def:h2t}, respectively. Provided the considered model is in DCF as in equation \eqref{eq:H_decomp}, these norms then have a simplified expressions as detailed in \cref{prop:h2pr}, \cref{prop:h2wpr} and \cref{prop:h2tpr}. In addition, the definition of the $\mathcal{H}_\infty$-norm is recalled in \cref{def:hi}.

\begin{definition}[$\mathcal{H}_2$-norm]\label{def:h2}
Considering a stable and strictly proper LTI model $H$ as in equation \eqref{eq:H}, its $\mathcal{H}_2$-norm is defined in the frequency domain as,
\begin{equation}
    \Vert H \Vert_2 \triangleq 
    \sqrt{
    \frac{1}{2\pi} \int_{-\infty}^{\infty} \Vert H(\imath \nu) \Vert_F^2 d\nu 
    }.
    \label{eq:h2def}
\end{equation}
\end{definition}

%
\begin{proposition}
\label{prop:h2pr}
Assume that the stable and strictly proper LTI model $H$ is in DCF as in equation \eqref{eq:H_decomp}, then its $\mathcal{H}_2$-norm can be computed as follows,
\begin{equation}
\begin{array}{rcl}
 \displaystyle \Vert H \Vert_2^2 &=& \displaystyle \sum_{i\in \Is} tr(\Phi_i H(-\lambda_i)^T)\\
&=&   \displaystyle \sum_{i,k\in \Is}  \frac{tr(\Phi_i \Phi_k^T)}{-\lambda_i - \lambda_k} .
\end{array}
\label{eq:prh2}
\end{equation}
\end{proposition}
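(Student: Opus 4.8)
The plan is to start from the Parseval/Plancherel characterization of the $\mathcal{H}_2$-norm in the time domain, namely $\Vert H \Vert_2^2 = \int_0^\infty \Vert h(t) \Vert_F^2 dt$ where $h(t) = C e^{At} B$ is the impulse response, and then substitute the DCF. Since $A$ has semi-simple eigenvalues, the decomposition \eqref{eq:H_decomp} gives the impulse response as $h(t) = \sum_{i \in \Is} \Phi_i e^{\lambda_i t}$ for $t \geq 0$. Writing the Frobenius norm as $\Vert h(t) \Vert_F^2 = tr\big(h(t) h(t)^H\big)$ and expanding the double sum, I would get
\begin{equation}
\Vert H \Vert_2^2 = \int_0^\infty \sum_{i,k \in \Is} tr\big(\Phi_i \Phi_k^H\big) e^{\lambda_i t} e^{\lambda_k^* t}\, dt.
\end{equation}
Here I must be a little careful: because $H$ is real, the poles come in conjugate pairs and the residues satisfy $\Phi_{\bar i} = \Phi_i^*$ for the conjugate index, so the full sum over $\Is$ is real; this lets me replace $\Phi_k^H$ by $\Phi_k^T$ after reindexing, since summing $tr(\Phi_i \Phi_k^H)/(-\lambda_i - \lambda_k^*)$ over all $i,k$ equals summing $tr(\Phi_i \Phi_k^T)/(-\lambda_i - \lambda_k)$ over all $i,k$. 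Each scalar integral $\int_0^\infty e^{(\lambda_i + \lambda_k^*)t}\,dt$ converges because $H$ is stable ($Re(\lambda_i) < 0$ for all $i$), and evaluates to $-1/(\lambda_i + \lambda_k^*)$, which after the reindexing above gives the denominator $-\lambda_i - \lambda_k$ in the second line of \eqref{eq:prh2}.

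For the first line of \eqref{eq:prh2}, I would group the double sum by the index $i$ and recognize the inner sum: $\sum_{k \in \Is} \frac{\Phi_k^T}{-\lambda_i - \lambda_k} = \sum_{k \in \Is} \frac{\Phi_k^T}{-(\lambda_i + \lambda_k)}$. Transposing the partial-fraction expansion \eqref{eq:H_decomp} evaluated at $s = -\lambda_i$ gives $H(-\lambda_i)^T = D^T + \sum_{k \in \Is} \frac{\Phi_k^T}{-\lambda_i - \lambda_k}$, and since $H$ is strictly proper, $D = 0$, so the inner sum is exactly $H(-\lambda_i)^T$. Pulling $\Phi_i$ out front and using linearity and cyclicity of the trace then yields $\Vert H \Vert_2^2 = \sum_{i \in \Is} tr\big(\Phi_i H(-\lambda_i)^T\big)$, which is the first equality. (Note $-\lambda_i$ lies in the open right half-plane, hence in the domain of $H$, so the evaluation is well defined.)

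The main obstacle — really the only subtle point — is the bookkeeping around complex conjugation: making rigorous the passage from the genuine inner-product expression with $\Phi_k^H$ and $\lambda_k^*$ to the "holomorphic-looking" expression with $\Phi_k^T$ and $\lambda_k$. The cleanest way is to invoke that the residue set is closed under conjugation (a consequence of $A,B,C$ real and the eigenvalues being semi-simple, as recorded in \cref{rq:H_practical_decomp}), so that the map $i \mapsto \bar i$ sending $\lambda_i \mapsto \lambda_i^*$ permutes $\Is$ and sends $\Phi_i \mapsto \Phi_i^*$; applying this permutation to the summation index $k$ converts $\sum_{i,k} tr(\Phi_i \Phi_k^H)/(-\lambda_i-\lambda_k^*)$ into $\sum_{i,k} tr(\Phi_i \Phi_k^T)/(-\lambda_i-\lambda_k)$. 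Everything else — convergence of the scalar integrals, interchange of the finite sum with the integral, the partial-fraction identity at $s=-\lambda_i$ — is routine given stability and strict properness.
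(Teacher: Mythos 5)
Your proof is correct. Note that the paper actually states \cref{prop:h2pr} without proof (it is the classical pole--residue expression of the $\mathcal{H}_2$-norm), so there is no in-paper argument to compare against; your route via the time-domain characterisation \eqref{eq:h2time}, the expansion $h(t)=\sum_{i\in\Is}\Phi_i e^{\lambda_i t}$ and termwise integration is exactly the strategy the paper does use for the time-limited analogue, \cref{prop:h2tpr}, so it fits the paper's style well. Two small remarks. First, the conjugation bookkeeping you identify as the only subtle point can be bypassed entirely: since $h(t)$ is real, you may write $\Vert h(t)\Vert_F^2 = tr\bigl(h(t)h(t)^T\bigr)$ and expand directly into $\sum_{i,k\in\Is} tr(\Phi_i\Phi_k^T)e^{(\lambda_i+\lambda_k)t}$, whose termwise integral (convergent because $Re(\lambda_i+\lambda_k)<0$ by stability) gives the second line of \eqref{eq:prh2} with no reindexing; your permutation argument $k\mapsto\bar k$ with $\Phi_{\bar k}=\Phi_k^*$ is nevertheless valid and proves the same identity. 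Second, the conjugation symmetry of the residues is not literally ``recorded'' in \cref{rq:H_practical_decomp}; it follows from $A,B,C$ being real once the eigenvectors are chosen in conjugate pairs (which is the paper's implicit convention, used e.g.\ in the proof of \cref{thm:h2pb}), so you should state that choice rather than cite the remark. The derivation of the first line from the second via $H(-\lambda_i)^T=\sum_{k\in\Is}\Phi_k^T/(-\lambda_i-\lambda_k)$, using strict properness and the fact that $-\lambda_i$ lies in the open right half-plane, is correct.
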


The $\mathcal{H}_2$-norm can be related to the time-domain in various ways. In particular, in the context of model reduction, let us consider the approximation error model $E = H - \Hr$, then for any input signal of bounded energy $u \in \mathcal{L}_2(\mathbb{R})$, the worst-case output error between the two models $\Vert y - \hat{y} \Vert_\infty$ is upper bounded as follows
\begin{equation}
    \Vert y - \hat{y} \Vert_\infty \leq \Vert H - \Hr \Vert_2 \Vert u \Vert_2.
    \label{eq:yh2bnd}
\end{equation}%

\begin{definition}[Frequency-limited $\mathcal{H}_2$-norm]
\label{def:h2w}
The frequency-limited $\mathcal{H}_2$-norm \cite{gawronski:2004:advanced}, denoted $\mathcal{H}_{2,\omega}$-norm, is defined by restricting the frequency interval to $[-\omega, \omega]$ in \eqref{eq:h2def} so that 
\begin{equation}
    \Vert H \Vert_{2,\omega} \triangleq 
    \sqrt{
    \frac{1}{2\pi} \int_{-\omega}^{\omega} \Vert H(\imath \nu) \Vert_F^2 d\nu 
    }.
    \label{eq:h2wdef}
\end{equation}
\end{definition}
\begin{proposition}
\label{prop:h2wpr}
The frequency-limited $\mathcal{H}_2$-norm of a LTI model $H$ in DCF form can be be computed similarly to \eqref{eq:prh2} as follows,
\begin{equation}
\small 
\Vert H \Vert_{2,\omega}^2 = \frac{\omega}{\pi} \Vert D\Vert_F^2 - \frac{2}{\pi} \sum_{i\in \Is} tr(\Phi_i H(-\lambda_i)^T) atan \left (\frac{\omega}{\lambda_i} \right ),
    \label{eq:prh2w}
\end{equation}
where $atan(z) = \frac{1}{2j} (log(1+\imath z) - log(1-\imath z))$ and $log(z)$ is the principal value of the complex logarithm for $z\neq 0$.
\end{proposition}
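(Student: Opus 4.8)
The plan is to start from the frequency-domain definition \eqref{eq:h2wdef} and substitute the partial-fraction decomposition \eqref{eq:H_decomp}, so that $\Vert H \Vert_{2,\omega}^2 = \frac{1}{2\pi}\int_{-\omega}^{\omega} \mathrm{tr}\big(H(\imath\nu) H(\imath\nu)^H\big) d\nu$ becomes a sum of elementary integrals. Writing $H(\imath\nu) = D + \sum_i \frac{\Phi_i}{\imath\nu - \lambda_i}$ and using $H(\imath\nu)^H = D^T + \sum_k \frac{\Phi_k^T}{-\imath\nu - \lambda_k^*}$ (recall $A,B,C,D$ are real, hence the poles come in conjugate pairs and $\Phi_k$ is the residue so $\overline{H(\imath\nu)} = H(-\imath\nu)$), the integrand expands into four types of terms: the constant $\mathrm{tr}(DD^T)$, the cross terms $\mathrm{tr}(D\Phi_k^T)/(-\imath\nu-\lambda_k^*)$ and its mirror, and the double sum $\mathrm{tr}(\Phi_i\Phi_k^T)/\big((\imath\nu-\lambda_i)(-\imath\nu-\lambda_k^*)\big)$. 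I would first handle the constant term, which integrates trivially to $\frac{2\omega}{2\pi}\Vert D\Vert_F^2 = \frac{\omega}{\pi}\Vert D\Vert_F^2$, matching the first term of \eqref{eq:prh2w}.

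Next I would treat the remaining terms by partial fractions in $\nu$ and reduce everything to integrals of the form $\int_{-\omega}^{\omega} \frac{d\nu}{\imath\nu - \lambda}$ for various $\lambda$ with $Re(\lambda)<0$. Such an integral is $\big[\log(\imath\nu-\lambda)\big]_{-\omega}^{\omega}/\imath$, and pairing a pole $\lambda_i$ with its conjugate appropriately, these logarithms combine into real arctangent expressions; concretely one gets contributions proportional to $\mathrm{atan}(\omega/\lambda_i)$ with the complex $\mathrm{atan}$ defined as in the statement. The cleanest route is probably to use the first line of \eqref{eq:prh2}, namely $\Vert H\Vert_2^2 = \sum_i \mathrm{tr}(\Phi_i H(-\lambda_i)^T)$, as a template: one expects the frequency-limited version to replace each term $\mathrm{tr}(\Phi_i H(-\lambda_i)^T)$ by the same quantity weighted by $-\frac{2}{\pi}\mathrm{atan}(\omega/\lambda_i)$, with the $D$-dependent part split off separately because $H(-\lambda_i)^T$ still contains $D^T$. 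So I would aim to show that after collecting all cross terms and double-sum terms, the coefficient of $\mathrm{atan}(\omega/\lambda_i)$ is exactly $-\frac{2}{\pi}\mathrm{tr}\big(\Phi_i H(-\lambda_i)^T\big) = -\frac{2}{\pi}\big(\mathrm{tr}(\Phi_i D^T) + \sum_k \frac{\mathrm{tr}(\Phi_i\Phi_k^T)}{-\lambda_i-\lambda_k}\big)$, using the residue identity $\frac{1}{(\imath\nu-\lambda_i)(-\imath\nu-\lambda_k)} = \frac{1}{-\lambda_i-\lambda_k}\big(\frac{1}{\imath\nu-\lambda_i} + \frac{1}{-\imath\nu-\lambda_k}\big)$ for the double sum.

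The main obstacle will be the careful bookkeeping of branch cuts of the complex logarithm and the verification that the antiderivative evaluated at the endpoints $\pm\omega$ genuinely collapses to the stated $\mathrm{atan}$ form without spurious $\imath\pi$ jumps — this is where the hypothesis $Re(\lambda_i)<0$ (stability) is used, guaranteeing $\imath\nu-\lambda_i$ stays in the right half-plane and never crosses the principal branch cut along the negative real axis as $\nu$ ranges over $[-\omega,\omega]$. A secondary point of care is showing the total is real: individual terms are complex, but conjugate poles $\lambda_i$ and $\lambda_i^*$ contribute conjugate quantities (since $\Phi_{i^*} = \Phi_i^*$ and $H(-\lambda_i^*)^T = \overline{H(-\lambda_i)^T}$), so their sum is real, and real poles contribute real terms by themselves. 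I would close by noting the consistency check $\omega\to\infty$: since $\mathrm{atan}(\omega/\lambda_i)\to \frac{\pi}{2}\,\mathrm{sign}(Re(1/\lambda_i))$-type limit — more precisely $\to -\frac{\pi}{2}$ in the appropriate sense for $Re(\lambda_i)<0$ — the expression \eqref{eq:prh2w} should reduce to \eqref{eq:prh2}, which both validates the formula and pins down the correct branch conventions.
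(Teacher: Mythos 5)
Your derivation is sound, but note that the paper itself does not prove this proposition: it simply defers to the reference \cite{vuillemin:2014:poles}, where the formula is obtained in the framework of frequency-limited Gramians (matrix-logarithm/$atan$ functions of $A$) rather than by direct scalar integration. Your route is therefore a self-contained, more elementary alternative: expand $H(\imath\nu)H(\imath\nu)^H$ using the DCF, integrate the constant block to get $\tfrac{\omega}{\pi}\Vert D\Vert_F^2$, use $\int_{-\omega}^{\omega}\frac{d\nu}{\imath\nu-\lambda} = -2\,atan(\omega/\lambda)$ together with the partial-fraction identity $\frac{1}{(\imath\nu-\lambda_i)(-\imath\nu-\lambda_k)} = \frac{1}{-\lambda_i-\lambda_k}\bigl(\frac{1}{\imath\nu-\lambda_i}+\frac{1}{-\imath\nu-\lambda_k}\bigr)$, and then exploit the symmetry of $tr(\Phi_i\Phi_k^T)/(-\lambda_i-\lambda_k)$ under $i\leftrightarrow k$ so that the double sum and the $D$-cross terms regroup exactly into $-\tfrac{2}{\pi}\sum_i tr\bigl(\Phi_i H(-\lambda_i)^T\bigr)atan(\omega/\lambda_i)$; this checks out, and your branch-cut argument (stability keeps $\imath\nu-\lambda_i$ in the open right half-plane, so the principal logarithm gives a continuous antiderivative and no spurious $\imath\pi$ jumps) together with the conjugate-pairing realness argument and the $\omega\to\infty$ consistency check is the right way to close it. Two small bookkeeping corrections: in your expansion of $H(\imath\nu)^H$ the residue paired with $\lambda_k^*$ should be $\Phi_k^H$ (equivalently $\Phi_k^T$ paired with $\lambda_k$ after reindexing over the conjugate-closed set of poles), not $\Phi_k^T$ with $\lambda_k^*$ as written — harmless here because the pole-residue set is closed under conjugation, but worth stating; and the $\omega\to\infty$ check only recovers \eqref{eq:prh2} for $D=0$, consistent with the strict properness required for the unrestricted $\mathcal{H}_2$-norm.
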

\begin{proof}
See \cite{vuillemin:2014:poles}.
\end{proof}

Note that $\mathcal{H}_{2,\omega}$ is only a semi-norm when considering the whole Lebesgue space $\mathcal{L}_2(\imath \mathbb{R})$ but it is a norm for rational functions as considered here.

Let $h(t)$ denotes the impulse response of $H$, corresponding to the inverse Laplace transform of $H(s)$, i.e.
\begin{equation}
    h(t) \triangleq \mathcal{L}^{-1}(H)(t) = Ce^{At}B + D\delta(t),
\end{equation}
where $\delta$ is the Dirac impulse. Due to Parseval's equality, for a stable and strictly proper model $H$, the $\mathcal{H}_2$-norm can also be computed in time-domain as follows,
\begin{equation}
    \Vert H \Vert_{2}=\Vert h \Vert_{2} \triangleq  \sqrt{\int_{-\infty}^{\infty} \Vert h(t) \Vert_F^2 dt}.
    \label{eq:h2time}
\end{equation}
For a stable system $H$, $h(t) = 0$ for $t<0$, therefore the integral in \eqref{eq:h2time} can be restricted to $\mathbb{R}_+$. In addition, by restricting even further the integration interval to $[0, \tau]$, one can define the time-limited $\mathcal{H}_2$-norm \cite{gawronski:2004:advanced,goyal:2019:timelim} as detailed in \cref{def:h2t}.

\begin{definition}[$h_{2,\tau}$-norm]
\label{def:h2t}
Considering a stable and strictly proper LTI model $H$ as in \eqref{eq:H} with impulse response $h(t)$, its time-limited $\mathcal{H}_2$-norm, denoted $h_{2,\tau}$-norm here, is defined for $\tau >0$ as
\begin{equation}
    \Vert h \Vert_{2,\tau}^2 \triangleq  \int_{0}^{\tau} \Vert h(t) \Vert_F^2 dt.
\end{equation}
\end{definition}

\begin{proposition}
\label{prop:h2tpr}
The time-limited $h_2$-norm of a LTI model $H$ in DCF form can be computed similarly to \eqref{eq:prh2} as follows,
\begin{equation}
    \Vert h \Vert_{2,\tau}^2 = \sum_{i,k\in \Is} \frac{tr(\Phi_i\Phi_k^T)}{\lambda_i +  \lambda_k } \left ( e^{(\lambda_i + \lambda_k) \tau} -1 \right ).
    \label{eq:h2tpr}
\end{equation}
\end{proposition}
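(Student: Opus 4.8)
The plan is to mimic the proof of \cref{prop:h2pr} but carry the finite integration bound through the time-domain computation, since $\Vert h\Vert_{2,\tau}^2$ is defined directly as a time integral. First I would substitute the DCF impulse response into the definition. From \eqref{eq:H_decomp} the strictly proper part of $H$ has impulse response $h(t) = \sum_{i\in\Is} \Phi_i e^{\lambda_i t}$ for $t>0$ (the Dirac term is irrelevant on $[0,\tau]$ for $\tau$ finite, and more importantly $\Vert h\Vert_{2,\tau}$ as stated ignores $D$). Then
\begin{equation}
\Vert h \Vert_{2,\tau}^2 = \int_0^\tau \Vert h(t)\Vert_F^2\, dt = \int_0^\tau tr\big( h(t) h(t)^H \big)\, dt,
\end{equation}
but since we want the form with $\Phi_k^T$ rather than $\Phi_k^H$ one should expand $\Vert h(t)\Vert_F^2 = tr(h(t)h(t)^T)$ — this is legitimate because $h(t)$ is real for $t\geq 0$, so $h(t)^H = h(t)^T$ even though the individual summands $\Phi_i e^{\lambda_i t}$ are complex (they come in conjugate pairs). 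Expanding the double sum gives
\begin{equation}
\Vert h \Vert_{2,\tau}^2 = \sum_{i,k\in\Is} tr(\Phi_i \Phi_k^T) \int_0^\tau e^{(\lambda_i+\lambda_k)t}\, dt.
\end{equation}

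The next step is the elementary integral $\int_0^\tau e^{(\lambda_i+\lambda_k)t}\,dt = \frac{1}{\lambda_i+\lambda_k}\big(e^{(\lambda_i+\lambda_k)\tau}-1\big)$, valid whenever $\lambda_i+\lambda_k\neq 0$, which holds here since $H$ is stable so every $\lambda_i\in\mathbb{C}_-$ and hence $Re(\lambda_i+\lambda_k)<0$. Substituting yields exactly \eqref{eq:h2tpr}. I would also remark, as a sanity check, that letting $\tau\to\infty$ sends $e^{(\lambda_i+\lambda_k)\tau}\to 0$ and recovers $\sum_{i,k} \frac{tr(\Phi_i\Phi_k^T)}{-\lambda_i-\lambda_k}$, i.e. \eqref{eq:prh2}, consistent with \eqref{eq:h2time}.

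There is essentially one subtlety worth flagging rather than a genuine obstacle: justifying that the expression is real-valued and that term-by-term integration of the finite sum is permitted. The latter is trivial (finite sum, continuous integrand on a compact interval). For the former, the cleanest argument is that $\Vert h\Vert_{2,\tau}^2$ is manifestly a nonnegative real number by \cref{def:h2t}, so the right-hand side, being equal to it, is automatically real; alternatively one notes the sum is invariant under complex conjugation because conjugation permutes the index pairs $(i,k)$ (the spectrum is closed under conjugation with $\Phi_{\bar\imath} = \overline{\Phi_i}$) and leaves each summand's conjugate in the sum. I expect the write-up to be short — the only step requiring care in phrasing is the passage from $\Vert h(t)\Vert_F^2$ to $tr(\Phi_i\Phi_k^T)$ with a transpose rather than a conjugate transpose, which must be explicitly tied to the realness of $h(t)$ on $\mathbb{R}_+$.
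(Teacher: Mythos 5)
Your proposal is correct and follows essentially the same route as the paper's proof: substitute the DCF impulse response $h(t)=\sum_{i\in\Is}\Phi_i e^{\lambda_i t}$ into the definition of $\Vert h\Vert_{2,\tau}^2$ and integrate term by term over $[0,\tau]$, the paper simply stating this as ``naturally leads to the expression after integration.'' Your additional care about using $\Phi_k^T$ versus $\Phi_k^H$ (justified by realness of $h(t)$), the stability argument ensuring $\lambda_i+\lambda_k\neq 0$, and the $\tau\to\infty$ consistency check with \eqref{eq:prh2} are all sound refinements of the same argument.
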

\begin{proof}
The DCF \eqref{eq:H_decomp} with $D = 0$ enables to re-write the impulse response as
\begin{equation}
    h(t) = \sum_{i\in \Is} \Phi_i e^{\lambda_i t},
\end{equation}
which naturally leads to the expression \eqref{eq:h2tpr} after integration.
\end{proof}

Again, $h_{2,\tau}$ is only a semi-norm for the whole space of square integrable functions $\mathcal{L}_2(\mathbb{R})$, but it is a norm for the impulse response functions associated with rational functions.

A time-domain bound of the error similar to the one available with the $\mathcal{H}_2$-norm \eqref{eq:yh2bnd} can be derived \cite{goyal:2019:timelim}.

\begin{definition}[$\mathcal{H}_\infty$-norm]
\label{def:hi}
Considering a stable LTI model $H$ as in \eqref{eq:H}, its $\mathcal{H}_\infty$-norm is defined as follows,
\begin{equation}
    \Vert H \Vert_\infty \triangleq  \sup_{\nu \in \mathbb{R}} \sigma_1(H(\imath \nu)),
\end{equation}
where $\sigma_1(H(\imath \nu))$ is the largest singular value of the transfer matrix. 
\end{definition}
The $\mathcal{H}_\infty$-norm represents the worst amplification gain of the system and is a widely used measure of robustness. Similarly to the $\mathcal{H}_2$-norm, within the context of model reduction, it enables to bound the $\mathcal{L}_2$ gain of the approximation error,
\begin{equation}
    \Vert y - \hat{y} \Vert_2 \leq \Vert H - \Hr \Vert_\infty \Vert u \Vert_2.
\end{equation}
Note that the computation of the $\mathcal{H}_\infty$-norm either requires an iterative bisection procedure or the resolution of a Semi Definite Program (SDP) \cite{Scherer:1997}.

\subsection{Reminder on the modal truncation}
Modal truncation consists in keeping only $r$ elements from the decomposition \eqref{eq:H_decomp} to form the reduced-order model. In particular, by defining the subset $\Irs \subset \Is$ containing $r$ unique elements from $\Is$,
\begin{equation}
    \hat{H}(s) = \hat{D} + \sum_{i=1}^r \frac{\hat{\Phi}_i}{s - \hat{\lambda}_i} = \hat{D} + \sum_{i\in \Irs} \frac{\Phi_i}{s - \lambda_i}.
    \label{eq:HIrs}
\end{equation}
Note that for $\Hr$ to have a real realisation, the retained complex eigenvalues of $H$ must be selected together with their complex-conjugate pair. This implies that some combinations are not allowed to form $\Irs$. 
In standard modal truncation, $\hat{D}$ is chosen equal to $D$.

Modal truncation then boils down to select the \emph{dominant} poles-residues couples that should be included within $\Irs$. Dominant poles may be defined in various ways. Below, the definition based on the bound of the $\mathcal{H}_\infty$-norm of the approximation error that is generally considered in the reduction literature is recalled. 


\paragraph{Approximation error and bounds}Let us denote by $\Es = \Is \setminus \Irs$ the set of discarded indexes. The approximation error $E$ between $H$ and $\Hr$ is then naturally given as,
\begin{equation}
    E(s) = H(s) - \Hr(s) = D - \hat{D} + \sum_{i\in \Es} \frac{\Phi_i}{s - \lambda_i}.
\end{equation}

For its $\mathcal{H}_2$ norm to be bounded, its direct feedthrough must be zero, i.e. $\hat{D}- D$ must be zero.  In that case, $\Vert E \Vert_2$ is readily obtained by considering the specific formulation of the norm for transfer functions with such structure \eqref{eq:prh2}, 
\begin{equation}
    \Vert E \Vert_2^2 =  \sum_{i\in \Es} tr(\Phi_i E(-\lambda_i)^T).
    \label{eq:h2err}
\end{equation}

For the $\mathcal{H}_\infty$-norm, only an upper bound has been derived based on the triangular inequality. For $D = \hat{D}$, it states that,
\begin{equation}
    \Vert E \Vert_\infty \leq \sum_{i\in\Es} \frac{\Vert \Phi_i \Vert_2}{|Re(\lambda_i)|}.
    \label{eq:hibound}
\end{equation}

\paragraph{Usual criterion for determining $\Irs$}Dominant poles are generally defined as the poles $\lambda_i$ that have the largest ratio
\begin{equation}
    \frac{\Vert \Phi_i\Vert_2}{|Re(\lambda_i)|}.
    \label{eq:hi_crit_poles}
\end{equation}
Such a choice to fill the set $\Irs$ enables to minimise the $\mathcal{H}_\infty$ bound \eqref{eq:hibound} of the approximation error. Still, it does not make the resulting reduced-order model optimal with respect to the $\mathcal{H}_\infty$-norm and we shall see in \cref{sec:contrib} that optimality considerations allow to characterise dominant poles in a more generic way.


\section{Optimal modal truncation}
\label{sec:contrib}

The main idea here consists in formulating the modal truncation method as an optimisation problem. The latter is stated formally in \cref{pb:init}. Dominant poles-residues are then defined in \cref{def:dpr} as the elements associated to the corresponding optimal solution.

\begin{problem}[Optimal modal truncation]
\label{pb:init}
Let us consider a stable $n$-th order LTI dynamical model $H$ in DCF \eqref{eq:H_decomp}, a reduction order $0<r<n$ and the set $\Irs$ of indexes containing the elements to be kept within reduced-order model $\Hr$ as in \eqref{eq:HIrs}. Considering in addition some system norm $\Vert \cdot \Vert$ (e.g. $\mathcal{H}_2$, $\mathcal{H}_\infty$, etc.), the optimal modal truncation problem can then be formally stated as
\begin{equation}
\begin{array}{rl}
    \displaystyle \min_{\Irs}& \displaystyle \Vert H - \Hr \Vert\\
    s.t.&\\
    &\Hr\text{ given by \eqref{eq:HIrs}}\\
    &\Hr\text{ has real coefficients}
\end{array}
\label{eq:pb:init}
\end{equation}
\end{problem}

\begin{definition}[Dominant poles-residues]
\label{def:dpr}
Suppose that $\Irso_r^n$ solves the optimal modal approximation problem \eqref{eq:pb:init}, then the set of $r$ dominant poles-residues with respect to the associated system norm is defined as
\begin{equation}
    \Lambda(\Irso_r^n) =  \{\lambda_i, \Phi_i\}_{i\in \Irso_r^n}
\end{equation}

\end{definition}

As illustrated in \cref{ex:toy}, solving \cref{pb:init} boils down to select the $r$ poles-residues amongst $n$ that minimise the error. Note that as complex poles must come by pairs, the exact number of possible unique combinations depends on their number within the initial model $H$ as detailed in \cref{maillardConjecture}.

\begin{proposition}
\label{maillardConjecture}
Considering a $n$-th order LTI model $H$ with $n_c$ pairs of complex poles and $n_r = n  - 2 n_c$ real poles, then the number of possible combinations for $r$-th order modal truncation ($r<n$) with a real state-space realisation is 

\begin{equation}
    \kappa(n_r, n_c, r) =     \begin{cases}
          \binom{n_r}{r} & \text{if } n_c=0\\
         \binom{n_c}{N_e} &\text{if } n_r = 0\\
     \sum_{i=N_s}^{N_e} \binom{n_c}{i} \binom{n_r}{r-2i} & \text{if } n_r >0, n_c > 0
    \end{cases}
\end{equation}
with $N_s = \max(0,\lceil \frac{r-n_r}{2} \rceil)$ and $N_e = \min(n_c,\lfloor \frac{r}{2} \rfloor)$.
\end{proposition}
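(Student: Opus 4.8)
The plan is to count the admissible index sets $\Irs$ by stratifying according to how many complex-conjugate pairs are retained. Write the $n$ poles of $H$ as $n_c$ complex-conjugate pairs together with $n_r = n - 2n_c$ real poles. An admissible $\Irs$ of cardinality $r$ must contain each retained complex pole together with its conjugate; hence if $i$ denotes the number of retained pairs, then $\Irs$ consists of $i$ pairs (chosen from the $n_c$ available pairs) and $r - 2i$ real poles (chosen from the $n_r$ available real poles). These choices are independent, so the number of admissible sets with exactly $i$ retained pairs is $\binom{n_c}{i}\binom{n_r}{r-2i}$, and the total is the sum of this product over the feasible range of $i$.

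Next I would pin down the summation range by the two obvious constraints: we need $r - 2i \ge 0$, i.e. $i \le \lfloor r/2 \rfloor$, and we need $0 \le r - 2i \le n_r$, i.e. $i \ge (r - n_r)/2$, i.e. $i \ge \lceil (r - n_r)/2 \rceil$; we also need $i \le n_c$ and $i \ge 0$. Combining these gives exactly $N_s = \max\!\big(0, \lceil (r - n_r)/2 \rceil\big) \le i \le \min\!\big(n_c, \lfloor r/2 \rfloor\big) = N_e$, which yields the third case $\kappa = \sum_{i=N_s}^{N_e}\binom{n_c}{i}\binom{n_r}{r-2i}$. The two special cases are then just specializations: if $n_c = 0$ every retained pole is real, forcing $i = 0$ and leaving $\binom{n_r}{r}$; if $n_r = 0$ then $r$ must be even and all retained poles come in pairs, so $i = r/2$ is forced and, since $N_s = N_e = r/2$ here, the single surviving term is $\binom{n_c}{r/2}$ — consistent with writing it as $\binom{n_c}{N_e}$.

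I would also note, for completeness, the edge behaviour: when $n_r > 0$ and $n_c > 0$ but $r$ is such that the range is empty ($N_s > N_e$), the empty sum correctly returns $0$, matching the fact that no admissible truncation of that order exists (this can only happen if $r$ is odd and $n_r = 0$, which is excluded in this branch, so in fact $N_s \le N_e$ always holds here; this is worth a one-line remark rather than a separate argument).

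There is no real obstacle: the only point requiring a little care is the derivation of $N_s$ and $N_e$ from the inequalities $0 \le r - 2i \le n_r$, $0 \le i \le n_c$ — in particular getting the ceiling in $N_s$ right (since $i$ is an integer and $(r-n_r)/2$ need not be) and checking that the two degenerate cases are genuinely covered by, rather than in conflict with, the general formula. Everything else is an elementary product-rule count over a disjoint union of strata.
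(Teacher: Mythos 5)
Your proposal is correct and follows essentially the same route as the paper: stratify the admissible index sets by the number $i$ of retained complex-conjugate pairs, count each stratum as $\binom{n_c}{i}\binom{n_r}{r-2i}$, and recover $N_s$, $N_e$ from the feasibility constraints, with the two special cases as degenerate instances. Your derivation of the summation bounds from the inequalities $0 \le r-2i \le n_r$, $0 \le i \le n_c$ is in fact slightly more explicit than the paper's discussion, but it is the same argument.
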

\begin{proof}

Case 1 $(n_c = 0)$ : if there is no complex pole in the model, $n = n_r$, the binomial coefficient counts how many sets of size $r$ formed of $n_r$ real poles exist. 
Case 2 $(n_r = 0)$ : if there is no real pole in the model, $n = n_c$ and we have to count as previously. As the set of poles in an r-modal truncation is of size $r$ and the complex poles come by pairs, the quotient $N$ resulting from the division of $r$ by $2$ represents the maximum number of complex poles in an r-modal truncation. Case 3  $(n_c >0, n_r>0)$ : in between $N_s$ and $N_e$, the formula uses the same principle and sums over the different configurations that the set of poles can take when there is a combination of real and complex poles. $N_s$ and $N_e$ are set so binomial coefficients are always defined. When $i = N_s$, there is no or the minimum number (depending the number of real poles) of complex pole in the truncation. When $i=N_e$ and $r$ is even,  the set of poles is only made of complex poles ($r - 2i = 0$) and the first term is equal to the second case. When $i=N_e$ and $r$ is odd, the set contains only $N_e$ complex poles and one real pole. The right term equals $\binom{n_r}{1} = n_r$ which effectively counts the number of possibilities to fill the single slots in the sets of complex poles. 
\end{proof}

While the number of combinations $\kappa$ grows slower than $\binom{n}{r}$ as soon as the model $H$ has some complex eigenvalues, it is still large enough to prevent from scanning exhaustively the decision tree.

\begin{example}
\label{ex:toy}
Let us consider the following third order model 
\begin{equation}
    \small
    H(s) = H_1(s) + H_2(s) + H_3(s) = \frac{1}{s + 1} + \frac{1}{s + 3} + \frac{2}{s + 5},
    \label{eq:toy}
\end{equation}
that should be reduced to $r=2$. In this simple case and considering e.g. the $\mathcal{H}_2$-norm, one can evaluate all the possible combinations. \cref{fig:toy} shows a tree where each node represents the subsystem which is added in the reduced-order model. For instance, the leftmost branch leads to $H_1 + H_2$. The approximation error is also displayed next to each node. Note that this tree contains redundant branches (in grey) due to commutativity of the sum. As only real poles are considered, at the end, there is ${n\choose{r}} = \kappa(3,0,2)  = 3$ unique possible combinations and the optimal model $\Hr$ is clearly given by
\begin{equation}
    \Hr(s) = H_1(s) + H_3(s),
\end{equation}
meaning that $\{-1,1\}$ and $\{-5,2\}$ are the dominant poles-residues for the $\mathcal{H}_2$-norm.

\begin{figure}
    \centering
    \begin{tikzpicture}[level/.style={sibling distance=25mm/#1}]
    \node [circle,draw] (z){$0$}{
      child {node [circle,draw] (a) {$H_1$}
        child {node [circle,draw] (a2) {$H_2$}}
        child {node [circle,draw] (a3) {$H_3$}}
      }
      child {node [circle,draw] (b) {$H_2$}
        child {node [circle,draw,color=black!30!white] (b1) {$H_1$}}
        child {node [circle,draw] (b3) {$H_3$}}
      }
      child {node [circle,draw,color=black!30!white] (c) {$H_3$}
        child {node [circle,draw,color=black!30!white] (c1) {$H_1$}}
        child {node [circle,draw,color=black!30!white] (c2) {$H_2$}}      
      }
    };
    \node at (a.south) [font=\scriptsize,color=red,anchor = north] {$1.03$};
    \node at (b.south) [font=\scriptsize,color=red,anchor = north] {$1.25$};
    \node at (c.south) [font=\scriptsize,color=red,anchor = north] {$1.08$};

    \node at (a2.south) [font=\scriptsize,color=red,anchor = north] {$0.63$};
    \node at (a3.south) [font=\scriptsize,color=red,anchor = north] {$0.41$};
    
    \node at (b1.south) [font=\scriptsize,color=red,anchor = north] {$0.63$};
    \node at (b3.south) [font=\scriptsize,color=red,anchor = north] {$0.71$};
    
    \node at (c1.south) [font=\scriptsize,color=red,anchor = north] {$0.41$};
    \node at (c2.south) [font=\scriptsize,color=red,anchor = north] {$0.71$};
    
    \end{tikzpicture}
    \caption{Tree of possible combinations for the second order optimal approximation of \eqref{eq:toy} with respect to the $\mathcal{H}_2$-norm.}
    \label{fig:toy}
\end{figure}

\end{example}

To reformulate \cref{pb:init} in a practical way, let us consider the following parametrization of the reduced-order model,
\begin{equation}
    H_\alpha(s) = D_\alpha + \sum_{i \in \Is} \alpha_i \frac{\Phi_i}{s - \lambda_i},
    \label{eq:Ha}
\end{equation}
where $\alpha_i = e_i^T \alpha\in \{0,1\}$ are binary variables acting as activation variables and $D_\alpha \in \mathbb{R}^{n_y \times n_u}$. With this parametrization, the order of the reduced model may be enforced by the constraint
\begin{equation}
    \mathds{1}^T \alpha = r,
    \label{eq:cst_order}
\end{equation}
where $\mathds{1}\in \mathbb{R}^n$ is a vector full of ones. In addition, to ensure that $H_\alpha$ has a real realisation, the complex conjugate pairs of poles must be kept together. This translates into additional linear constraints between the $\alpha_i$. Indeed, let us consider the set of $n_c$ complex conjugate pairs indexes,
\begin{equation}
    \mathcal{I}^C = \{ \{i,j\}\in \Is / \lambda_i \in \mathbb{C},\, Im(\lambda_i)>0,\, \lambda_i = \bar{\lambda}_j \},
    \label{eq:Icdef}
\end{equation}
and the matrix $M \in \mathbb{R}^{n_c\times n}$ which contains a row for each couple $\{i,j\} \in \mathcal{I}^C$ such that,
\begin{equation}
    [M]_{\cdot,i} =  - [M]_{\cdot,j} = 1. 
\end{equation}
Then the realness constraint is 
\begin{equation}
    M \alpha = 0.
    \label{eq:cst_realness}
\end{equation}
By coupling the parametrization \eqref{eq:Ha} with the constraints \eqref{eq:cst_order}, \eqref{eq:cst_realness} and the binary constraint, \cref{pb:init} may be reformulated as a binary optimisation problem as stated in \cref{pb:initbin}.

\begin{problem}[Binary formulation of optimal modal approximation]
\label{pb:initbin}
Considering the parametrization \eqref{eq:Ha} for the reduced-order model, \cref{pb:init} is equivalent to the following problem,
\begin{equation} 
\begin{array}{lc}
     \displaystyle \min_{\alpha}&   \Vert H - H_\alpha \Vert \\
     s.t.&
     \begin{array}[t]{rcl}
     \\
     \alpha&\in&\{ 0,1\}^n \\
     \mathds{1}^T \alpha&=&r\\
     M \alpha&=&0
     \end{array}
\end{array}
\label{eq:pb:initeq}
\end{equation}
\end{problem}

In the following section, \cref{pb:initbin} is specified for $\mathcal{H}_2$-norm, its frequency/time limited counterparts and the $\mathcal{H}_\infty$-norm.

\subsection{In $\mathcal{H}_2$-norm}
\label{ssec:h2opt}
Considering the framework introduced in \cref{pb:initbin}, the optimal  $\mathcal{H}_2$ modal truncation problem can be recasted as a convex binary quadratic problem as stated in \cref{thm:h2pb}.

\begin{theorem}[Optimal $\mathcal{H}_2$ modal truncation]
\label{thm:h2pb}
Considering the notations of \cref{pb:initbin}, the $r$-th order optimal $\mathcal{H}_2$ modal truncation model $H_\alpha^\star$ is such that $D_\alpha^\star = D $ and $\alpha^\star$ is the solution of the following convex quadratic binary problem,
\begin{equation} 
\begin{array}{lc}
     \displaystyle \min_{\alpha}&   (\mathds{1} - \alpha)^T Q (\mathds{1} - \alpha)\\
     s.t.&
     \begin{array}[t]{rcl}
     \\
     \alpha&\in&\{ 0,1\}^n \\
     \mathds{1}^T \alpha&=&r\\
     M \alpha&=&0
     \end{array}
\end{array}
\label{eq:pbh2}
\end{equation}
where $Q \in \mathbb{C}^{n\times n}$ is a hermitian matrix which entries are given as,
\begin{equation}
    [Q]_{i,j} = \displaystyle \frac{tr(\Phi_i \Phi_j^H)}{-\lambda_i - \lambda_j^*},\quad i,j = 1,\ldots,n
    \label{eq:Qmat}
\end{equation}
\end{theorem}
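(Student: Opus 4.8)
The plan is to start from the error model $E = H - H_\alpha$ and observe that, for the $\mathcal{H}_2$-norm to be finite, the feedthrough of $E$ must vanish, which forces $D_\alpha = D$; this is why $D_\alpha^\star = D$ is not an extra hypothesis but a consequence. With $D_\alpha = D$, the parametrization \eqref{eq:Ha} gives
\begin{equation}
    E(s) = H(s) - H_\alpha(s) = \sum_{i\in\Is} (1-\alpha_i)\frac{\Phi_i}{s-\lambda_i},
\end{equation}
so $E$ is itself in DCF form with residues $(1-\alpha_i)\Phi_i$ and poles $\lambda_i$. The key step is then simply to feed this into the closed-form $\mathcal{H}_2$-norm expression of \cref{prop:h2pr}, the second line of \eqref{eq:prh2}:
\begin{equation}
    \Vert E \Vert_2^2 = \sum_{i,k\in\Is} (1-\alpha_i)(1-\alpha_k)\,\frac{tr(\Phi_i\Phi_k^T)}{-\lambda_i-\lambda_k}.
\end{equation}
Writing $\beta = \mathds{1}-\alpha$, this is exactly the quadratic form $\beta^T \tilde Q \beta$ with $[\tilde Q]_{i,k} = tr(\Phi_i\Phi_k^T)/(-\lambda_i-\lambda_k)$, which is the objective of \eqref{eq:pbh2} up to identifying $\tilde Q$ with $Q$ as in \eqref{eq:Qmat}.

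Next I would reconcile the matrix $Q$ of the statement — which uses conjugate transpose $\Phi_j^H$ and $\lambda_j^*$ and is asserted hermitian — with the $\tilde Q$ that falls out of \eqref{eq:prh2}, which uses plain transpose. The point is that $\alpha$, hence $\beta$, is a real vector, so only the real (hermitian) part of any matrix contributes to $\beta^T(\cdot)\beta$; and because the poles and residues come in complex-conjugate pairs (the set $\mathcal{I}^C$ and the constraint $M\alpha=0$ guarantee $\alpha_i = \alpha_j$ on a conjugate pair), one checks that $\sum_{i,k}\beta_i\beta_k\, tr(\Phi_i\Phi_k^T)/(-\lambda_i-\lambda_k)$ equals $\beta^T Q \beta$ with $Q$ as in \eqref{eq:Qmat}. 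I would verify hermiticity of $Q$ directly: $[Q]_{j,i}^* = \big(tr(\Phi_j\Phi_i^H)/(-\lambda_j-\lambda_i^*)\big)^* = tr(\Phi_i\Phi_j^H)/(-\lambda_i^*-\lambda_j) $... wait, one must be careful — $tr(\Phi_j\Phi_i^H)^* = tr(\Phi_j^*\Phi_i^T) = tr(\Phi_i^T\Phi_j^*) = tr((\Phi_i^H\Phi_j)^T)$; the cleaner route is $tr(\Phi_j\Phi_i^H)^* = tr((\Phi_j\Phi_i^H)^H) = tr(\Phi_i\Phi_j^H)$, and the denominator conjugates to $-\lambda_j^*-\lambda_i$, giving $[Q]_{j,i}^* = [Q]_{i,j}$, i.e. $Q = Q^H$. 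Then $\beta^T Q\beta = \overline{\beta^T Q\beta}$ is real, as required for a sensible objective.

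Finally I would establish convexity of the quadratic form, i.e. that $Q$ is positive semidefinite on $\mathbb{R}^n$ (equivalently that its hermitian-ness combined with the structure makes $\beta^T Q\beta\ge 0$). This is immediate from the derivation: $\beta^T Q\beta = \Vert E\Vert_2^2 \ge 0$ for every real $\beta\in\{0,1\}^n$ satisfying $M\beta = M\mathds{1} - M\alpha$... actually $M\mathds{1} = 0$ since each row of $M$ has a $+1$ and a $-1$, so $\beta$ ranges over the same affine structure; more robustly, for an arbitrary real $\beta$ one can realize it (up to the conjugate-pairing, which spans a real subspace) as the residue-scaling of some rational $E$ and invoke nonnegativity of $\Vert E\Vert_2^2$, or one can note that $Q$ is a Cauchy-like Gram matrix $\int_0^\infty (\Phi_i e^{\lambda_i t})(\Phi_k e^{\lambda_k t})^* dt$-type object, hence PSD. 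The equivalence with \cref{pb:initbin} for the $\mathcal{H}_2$-norm then follows since the constraint sets coincide and the objectives are equal on the feasible set. The main obstacle is purely bookkeeping: tracking transposes versus conjugate transposes and making the conjugate-pair symmetry argument airtight so that replacing $\Phi_k^T,\lambda_k$ by $\Phi_k^H,\lambda_k^*$ genuinely leaves $\beta^T Q\beta$ unchanged for feasible $\beta$ — there is no deep difficulty beyond that.
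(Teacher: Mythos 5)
Your proposal is correct and follows essentially the same route as the paper's proof: force $D_\alpha = D$ for finiteness, plug the DCF error into \cref{prop:h2pr}, use the conjugate-pair structure together with $M\alpha=0$ to trade $\{\Phi_k^T,\lambda_k\}$ for $\{\Phi_k^H,\lambda_k^*\}$, and conclude convexity. Your explicit check of hermiticity and the Gram-matrix justification of positive semidefiniteness merely spell out what the paper dispatches with its ``squared-norm, hence convex'' remark, so there is nothing to correct.
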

\begin{proof}
For the $\mathcal{H}_2$-norm of the approximation error $E_\alpha = H - H_\alpha$ to be finite, $D_\alpha$ is constrained to be equal to $D$ and may be discarded in the sequel. In that case, $E_\alpha$ is given as
\begin{equation}
    E_\alpha(s) =  H(s) - H_\alpha(s) = \sum_{i \in \Is} (1 - \alpha_i) \frac{\Phi_i}{s - \lambda_i},
    \label{eq:Ea}
\end{equation}
and from equation \eqref{eq:prh2}, the quadratic nature of the error w.r.t. $\alpha$ appears,
\begin{equation}
    \Vert E_\alpha \Vert_2^2 = \sum_{i,k \in \Is} (1 - \alpha_i) \frac{tr(\Phi_i \Phi_k^T)}{-\lambda_i - \lambda_k} (1 - \alpha_k).
    \label{eq:h2err_iqp}
\end{equation}
As each complex pole-residue pair in $H$ comes with its complex conjugate, the sums in the right-hand side of \eqref{eq:h2err_iqp} may be reordered so that $\{\lambda_k,\Phi_k^T\}$ is replaced by their conjugate. The $\mathcal{H}_2$ error can then be rewritten as
\begin{equation}
    \Vert E_\alpha \Vert_2^2 = (\mathds{1} - \alpha)^T Q (\mathds{1} - \alpha).
    \label{eq:h2err1}
\end{equation}
Coupling the objective function \eqref{eq:h2err1} with the constraints \eqref{eq:cst_order}, \eqref{eq:cst_realness} and the binary constraint leads to the optimisation problem \eqref{eq:pbh2}. 

Its objective is a squared-norm and is therefore strictly convex. The equality constraints of problem \eqref{eq:pbh2} are linear and thus convex. Therefore, it is a binary convex quadratic problem.
\end{proof}

As the relaxation of the binary problem \eqref{eq:pbh2} is convex, efficient branch and bounds algorithms (see e.g. \cite{bliek1u:2014:solving}) can be used to solve the overall optimal $\mathcal{H}_2$ truncation problem. 

Yet, state of the art solvers may not handle the fact that $Q$ is complex. However, as each complex element comes with its conjugate in the sum \eqref{eq:h2err1} (the overall sum is real), $Q$ can be replaced with $\tilde{Q} = \frac{1}{2} (Q + Q^H)$ which leads to the same objective function as long as $M \alpha = 0$.

\paragraph{About the initialisation}While existing general purpose solvers are perfectly able to determine a feasible starting point, providing a meaningful feasible initial solution may help to prune rapidly some parts of the tree.

As highlighted in \cref{prop:h2bound}, the $\mathcal{H}_2$-norm of the approximation error is upper bounded by the sum of each subsystems norms. Therefore, this suggests to initially select the pole-residue pairs with largest associated $\mathcal{H}_2$-norm \eqref{eq:h2crit_poles}. Again, complex conjugate pairs must be kept together meaning that corner cases have to be dealt with. In particular, if one pole remains to be selected but the next largest couple is complex, then either discard it until a real pole is reached or discard the last real pole selected to get the complex pair. Alternatively in those cases, decrease or increase $r$ by one. The initialisation process is highlighted in \cref{ex:toyinit}.


\begin{proposition}
\label{prop:h2bound}
The $\mathcal{H}_2$-norm of the approximation error $E_\alpha$ is bounded by the sum of the discarded subsystems norms,
\begin{equation}
    \Vert E_\alpha \Vert_2 \leq \sum_{i\in \Es} \Vert H_i \Vert_2,
    \label{eq:h2bound}
\end{equation}
where the individuals subsystems norms are given as 
\begin{equation}
    \Vert H_i \Vert_2 = \frac{\Vert \Phi_i \Vert_F}{\sqrt{-2Re(\lambda_i)}}.
    \label{eq:h2crit_poles}
\end{equation}
\end{proposition}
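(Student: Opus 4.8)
The plan is to establish two separate facts: first the triangle-inequality bound \eqref{eq:h2bound}, and second the closed-form expression \eqref{eq:h2crit_poles} for the norm of an individual first-order subsystem. For the first, I would write the approximation error in its additive form $E_\alpha(s) = \sum_{i\in\Es} \frac{\Phi_i}{s-\lambda_i} = \sum_{i\in\Es} H_i(s)$, where $H_i(s) = \frac{\Phi_i}{s-\lambda_i}$ is the $i$-th discarded subsystem. Since the $\mathcal{H}_2$-norm is a genuine norm on the space of stable strictly proper rational functions, the triangle inequality applied termwise gives $\Vert E_\alpha \Vert_2 = \Vert \sum_{i\in\Es} H_i \Vert_2 \le \sum_{i\in\Es} \Vert H_i \Vert_2$ directly. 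One subtlety worth a sentence: when $\lambda_i$ is complex, $H_i$ alone does not have a real realisation, but the $\mathcal{H}_2$-norm is still well-defined on complex rational functions via \eqref{eq:h2def}, so the bound holds as stated; alternatively one can group each conjugate pair into a single second-order real block without changing the argument.

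For the second fact, I would compute $\Vert H_i \Vert_2^2$ by specialising \cref{prop:h2pr} to the single-term transfer function $H_i(s) = \frac{\Phi_i}{s-\lambda_i}$. From the second line of \eqref{eq:prh2} with the index set reduced to the singleton $\{i\}$, we get $\Vert H_i \Vert_2^2 = \frac{tr(\Phi_i \Phi_i^T)}{-\lambda_i - \lambda_i}$. Here I must be slightly careful about whether to use $\Phi_i^T$ or $\Phi_i^H$: the statement \eqref{eq:prh2} is written for models with real realisations, but for the magnitude one has $tr(\Phi_i \Phi_i^H) = \Vert \Phi_i \Vert_F^2$ and, since the complex-conjugate partner contributes the conjugate term, the pair together yields $2\,\mathrm{Re}\big(\frac{\Vert\Phi_i\Vert_F^2}{-2\lambda_i}\big) = \frac{\Vert\Phi_i\Vert_F^2}{-\mathrm{Re}(\lambda_i)}$, consistent with assigning $\Vert H_i\Vert_2^2 = \frac{\Vert \Phi_i\Vert_F^2}{-2\mathrm{Re}(\lambda_i)}$ to each. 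In any case $-\lambda_i - \lambda_i = -2\,\mathrm{Re}(\lambda_i)$ when the term is interpreted correctly, and since $H$ is stable, $\mathrm{Re}(\lambda_i) < 0$, so $-2\,\mathrm{Re}(\lambda_i) = 2|\mathrm{Re}(\lambda_i)| > 0$ and the square root is well-defined, giving $\Vert H_i \Vert_2 = \frac{\Vert \Phi_i \Vert_F}{\sqrt{-2\,\mathrm{Re}(\lambda_i)}}$.

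Combining the two pieces yields \eqref{eq:h2bound}. I expect the only real obstacle to be bookkeeping around the complex/conjugate structure — making sure the $\Phi_i^T$ versus $\Phi_i^H$ distinction in \cref{prop:h2pr} is applied so that $\Vert H_i\Vert_2$ comes out real and nonnegative, and justifying that the triangle inequality is legitimate for the individual complex-pole terms (or, equivalently, regrouping into real second-order blocks). Everything else is a direct substitution into \eqref{eq:prh2} and an invocation of the norm axioms, so the proof should be short.

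\begin{proof}
The approximation error admits the additive decomposition $E_\alpha(s) = \sum_{i\in\Es} H_i(s)$ with $H_i(s) = \frac{\Phi_i}{s-\lambda_i}$. Since $H$ is stable, each $H_i$ is a stable strictly proper rational function, and the $\mathcal{H}_2$-norm defined by \eqref{eq:h2def} is a norm on this space; the triangle inequality therefore gives
\begin{equation}
    \Vert E_\alpha \Vert_2 = \Big\Vert \sum_{i\in\Es} H_i \Big\Vert_2 \leq \sum_{i\in\Es} \Vert H_i \Vert_2.
\end{equation}
It remains to compute $\Vert H_i \Vert_2$. Applying \cref{prop:h2pr} to the single-term model $H_i$ (i.e. restricting the index set in \eqref{eq:prh2} to $\{i\}$) yields
\begin{equation}
    \Vert H_i \Vert_2^2 = \frac{tr(\Phi_i \Phi_i^H)}{-\lambda_i - \lambda_i^*} = \frac{\Vert \Phi_i \Vert_F^2}{-2Re(\lambda_i)}.
\end{equation}
Because $H$ is stable, $Re(\lambda_i)<0$, so $-2Re(\lambda_i) = 2|Re(\lambda_i)|>0$ and
\begin{equation}
    \Vert H_i \Vert_2 = \frac{\Vert \Phi_i \Vert_F}{\sqrt{-2Re(\lambda_i)}}.
\end{equation}
Substituting this into the triangle-inequality bound gives \eqref{eq:h2bound}.
\end{proof}
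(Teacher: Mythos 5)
Your proof is correct and follows the same route as the paper, which simply invokes the triangle inequality for the bound \eqref{eq:h2bound}; your additional explicit computation of $\Vert H_i \Vert_2$ via the single-pole specialisation of \eqref{eq:prh2} (with the correct conjugate pairing giving $-\lambda_i-\lambda_i^* = -2Re(\lambda_i)$) is just a more detailed justification of the formula \eqref{eq:h2crit_poles} that the paper states without proof.
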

\begin{proof}
Applying the triangular inequality to the norm of the approximation error $E_\alpha$ directly leads to the result.
\end{proof}


\begin{example}
\label{ex:toyinit}
Considering the trivial model of \cref{ex:toy}, the $\mathcal{H}_2$ criterion \eqref{eq:h2crit_poles} indicates,
\begin{equation}
\begin{array}{rcccl}
    \Vert H_1\Vert_2 &=& 1/\sqrt{2}&\approx &0.71,\\
    \Vert H_2\Vert_2 &=& 1/\sqrt{6}&\approx &0.41,\\
    \Vert H_3\Vert_2 &=& 2/\sqrt{10}&\approx & 0.63.
\end{array}
\end{equation}
This suggests $H_1 + H_3$ as reduced model, which is indeed the optimal one in that case. Note that the $\mathcal{H}_\infty$ criterion \eqref{eq:hi_crit_poles} leads to the same ordering here but this could be otherwise for multiple inputs multiple outputs cases.
\end{example}



\subsection{In $\mathcal{H}_{2,\omega}$-norm}
\label{ssec:h2wopt}
Similarly to the $\mathcal{H}_2$-case, the optimal $\mathcal{H}_{2,\omega}$ modal truncation problem can be recasted as an optimisation problem as stated in Theorem \ref{thm:h2w}. The main difference lies in the fact that for $\omega < \infty$, the frequency-limited $\mathcal{H}_2$-norm remains finite even when there is a direct feedthrough. Therefore, unlike in the $\mathcal{H}_2$ case, $D_\alpha \in \mathbb{R}^{n_y \times n_u}$ remains a free design variable in the parametrisation \eqref{eq:Ha} and the resulting optimisation problem is thus a mixed convex quadratic program.

\begin{theorem}[Optimal $\mathcal{H}_{2,\omega}$ modal truncation]
\label{thm:h2w}
Considering the framework of \cref{pb:initbin}, let us define $\bar{\alpha} = 1 - \alpha\in \mathbb{R}^n$, $m = n + n_y n_u$, $d = vec(D) \in \mathbb{R}^{n_y n_u}$, $d_\alpha = vec(D_\alpha)\in \mathbb{R}^{n_y n_u}$, $a_\omega(\lambda_i) = atan(\frac{\omega}{\lambda_i})$ and $x_\alpha = [\bar{\alpha}^T,d_\alpha^T]^T \in \mathbb{R}^{m}$. Then, the $r$-th order optimal $\mathcal{H}_{2,\omega}$ modal truncation model $H_\alpha^\star$ is obtained through $x_\alpha^\star$, solution of the following strictly convex mixed quadratic problem,
\begin{equation} 
\begin{array}{lc}
     \displaystyle \min_{\alpha,d_\alpha}&   x_\alpha^T \mathcal{Q}_\omega x_\alpha + c_\omega^T x_\alpha 
     \\
     s.t.&
     \begin{array}[t]{rcl}
     \\
     \alpha&\in&\{ 0,1\}^n \\
     d_\alpha&\in&\mathbb{R}^{n_y n_u}\\
     \mathds{1}^T \alpha&=&r\\
     M \alpha&=&0
     \end{array}
\end{array}
\label{eq:pbh2w}
\end{equation}
where $\mathcal{Q}_\omega \in \mathbb{C}^{m \times m}$ is a hermitian matrix defined as 
\begin{equation}
    \mathcal{Q}_\omega =  \mat{cc}{Q_\omega & \frac{1}{2} U_\omega\\
     \frac{1}{2} U_\omega^H& \frac{\omega}{\pi} I_{n_y n_u}}
\end{equation}
where the top left block is related to the matrix $Q$ \eqref{eq:Qmat}, for $i,j =1,\ldots,n$
\begin{equation}
    [Q_\omega]_{i,j} = -\frac{1}{\pi} \left (a_\omega(\lambda_i) + a_\omega(\lambda_k^*)\right )[Q]_{i,j}
    \label{eq:Qwmat}
\end{equation}
and the off-diagonal term is given, for $i = 1,\ldots,n$, as
\begin{equation}
    e_i^T U_{\omega} = -\frac{2}{\pi} a_\omega(\lambda_i) vec(\Phi_i)^T.
\end{equation}
Additionally, the linear term is given by
\begin{equation}
    c_\omega = -\frac{2}{\pi}\mat{ccccc}{ a_\omega(\lambda_1) vec(\Phi_1)^T&\cdots& a_\omega(\lambda_n) vec(\Phi_n)^T &\omega d^T}^T.
\end{equation}
\end{theorem}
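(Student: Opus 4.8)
The plan is to mirror the proof of \cref{thm:h2pb}, but starting from the frequency-limited expression \eqref{eq:prh2w} of \cref{prop:h2wpr} rather than from \eqref{eq:prh2}. The one structural novelty is that, since $\Vert\cdot\Vert_{2,\omega}$ stays finite in the presence of a direct feedthrough, the matrix $D_\alpha$ in the parametrisation \eqref{eq:Ha} need not equal $D$ and therefore survives as a continuous decision variable alongside the binary vector $\alpha$; the objective will end up quadratic jointly in $(\bar\alpha,d_\alpha)$, with $\bar\alpha=\mathds{1}-\alpha$ and $d_\alpha=vec(D_\alpha)$, which is exactly why the problem is a mixed binary/continuous quadratic program rather than a purely binary one.

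First I would write the error model. From \eqref{eq:Ha}, $E_\alpha(s)=H(s)-H_\alpha(s)=(D-D_\alpha)+\sum_{i\in\Is}\bar\alpha_i\,\Phi_i/(s-\lambda_i)$, i.e. $E_\alpha$ is in DCF with feedthrough $D-D_\alpha$ and residues $\bar\alpha_i\Phi_i$ (indices with $\bar\alpha_i=0$ contribute nothing, so the sum may be kept over all of $\Is$). Applying \cref{prop:h2wpr} to $E_\alpha$ gives $\Vert E_\alpha\Vert_{2,\omega}^2=\tfrac{\omega}{\pi}\Vert D-D_\alpha\Vert_F^2-\tfrac{2}{\pi}\sum_{i\in\Is}\bar\alpha_i\,a_\omega(\lambda_i)\,tr(\Phi_i E_\alpha(-\lambda_i)^T)$. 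Substituting $E_\alpha(-\lambda_i)=(D-D_\alpha)+\sum_k\bar\alpha_k\Phi_k/(-\lambda_i-\lambda_k)$ into the trace, using $\Vert D-D_\alpha\Vert_F^2=\Vert d-d_\alpha\Vert_2^2$ and $tr(\Phi_iD_\alpha^T)=vec(\Phi_i)^Td_\alpha$, the right-hand side splits into a term quadratic in $\bar\alpha$ (carrying $tr(\Phi_i\Phi_k^T)/(-\lambda_i-\lambda_k)$), a bilinear term in $\bar\alpha$ and $d_\alpha$ (carrying $a_\omega(\lambda_i)vec(\Phi_i)^T$), the purely quadratic term $\tfrac{\omega}{\pi}d_\alpha^Td_\alpha$, linear terms in $\bar\alpha$ and in $d_\alpha$, and the constant $\tfrac{\omega}{\pi}\Vert D\Vert_F^2$, which is irrelevant for the minimiser. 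Stacking the variables as $x_\alpha=[\bar\alpha^T,d_\alpha^T]^T$ assembles these contributions into $x_\alpha^T\mathcal{Q}_\omega x_\alpha+c_\omega^Tx_\alpha$ (up to the constant), with $\mathcal{Q}_\omega$ block-structured: the top-left block acting on $\bar\alpha$ from the quadratic $\bar\alpha$-term, the top-right block $\tfrac12U_\omega$ from the bilinear term, and the bottom-right block $\tfrac{\omega}{\pi}I_{n_yn_u}$.

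The delicate bookkeeping step, and the one I expect to cause the most trouble, is recasting these raw sums as the \emph{Hermitian} matrix $\mathcal{Q}_\omega$ of the statement. For the $\bar\alpha$-block I would reuse the device that takes \eqref{eq:h2err_iqp} to \eqref{eq:h2err1}: since $H$ is real, complex poles come in conjugate pairs $\lambda_j=\lambda_i^*$ with $\Phi_j=\Phi_i^*$, and the realness constraint \eqref{eq:cst_realness} forces $\bar\alpha_i=\bar\alpha_j$ on each pair, so on the feasible set the inner sum $\sum_k\bar\alpha_k\,tr(\Phi_i\Phi_k^T)/(-\lambda_i-\lambda_k)$ may be replaced by $\sum_k\bar\alpha_k\,tr(\Phi_i\Phi_k^H)/(-\lambda_i-\lambda_k^*)$, i.e. by the Hermitian $Q$ of \eqref{eq:Qmat}. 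The outer weight $a_\omega(\lambda_i)$ is then symmetrised: since $\bar\alpha$ is real, a quadratic form $\bar\alpha^TZ\bar\alpha$ is unchanged when $Z$ is replaced by its Hermitian part $\tfrac12(Z+Z^H)$, and using $\overline{a_\omega(\lambda_j)}=a_\omega(\lambda_j^*)$ (principal branch) this yields exactly $[Q_\omega]_{i,j}=-\tfrac1\pi\bigl(a_\omega(\lambda_i)+a_\omega(\lambda_j^*)\bigr)[Q]_{i,j}$ as in \eqref{eq:Qwmat}. The same Hermitian-part device applied to the cross and linear terms produces the off-diagonal block $\tfrac12U_\omega$ and the vector $c_\omega$; as in the remark following \cref{thm:h2pb}, only the Hermitian part of $\mathcal{Q}_\omega$ matters, every complex contribution occurring together with its conjugate on the constraint set.

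Finally I would append the order constraint \eqref{eq:cst_order}, the realness constraint \eqref{eq:cst_realness} and the binary constraint $\alpha\in\{0,1\}^n$ to obtain \eqref{eq:pbh2w}, and settle convexity exactly as for \cref{thm:h2pb}: the map $(\bar\alpha,d_\alpha)\mapsto E_\alpha$ is affine into the space of rational transfer functions, on which $\Vert\cdot\Vert_{2,\omega}$ is a genuine norm, so $\Vert E_\alpha\Vert_{2,\omega}^2$ is a strictly convex quadratic function of $x_\alpha$; together with the linear equality constraints this makes \eqref{eq:pbh2w} a strictly convex mixed quadratic program, from whose optimiser $x_\alpha^\star=[(\bar\alpha^\star)^T,(d_\alpha^\star)^T]^T$ the model $H_\alpha^\star$ is recovered.
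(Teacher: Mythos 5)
Your proposal follows essentially the same route as the paper's proof: expand $\Vert E_\alpha\Vert_{2,\omega}^2$ via \cref{prop:h2wpr} into the feedthrough, cross and pole-residue terms, symmetrise over complex-conjugate pairs to obtain the Hermitian blocks $Q_\omega$, $U_\omega$ and the linear term, stack $(\bar\alpha,d_\alpha)$ into $x_\alpha$, and conclude strict convexity from the fact that $\Vert\cdot\Vert_{2,\omega}$ is a norm on rational functions. Your explicit justification of the reordering via the realness constraint $M\alpha=0$ (equality of $\bar\alpha$ on conjugate pairs, Hermitian-part argument) is if anything slightly more careful than the paper's brief "rearrangement" remark, and is consistent with its comment following \cref{thm:h2pb}.
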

\begin{proof}
Based on equation \eqref{eq:prh2w}, the norm of the approximation error can be written as three components,
\begin{equation}
\begin{array}{rl}
    \Vert E_\alpha \Vert_{2,\omega}^2 =& E_{1,\omega}+E_{2,\omega} + E_{3,\omega}\\
    =&\displaystyle \frac{\omega}{\pi} \Vert D_e \Vert_F^2 \ldots 
    \\
    & \displaystyle-\frac{2}{\pi}\sum_{i\in \Is} (1-\alpha_i) tr(\Phi_i D_e^T) atan(\frac{\omega}{\lambda_i}) \ldots\\ 
    & \displaystyle - \frac{2}{\pi} \sum_{i,k \in \Is} (1-\alpha_i) \frac{tr(\Phi_i \Phi_k^T) atan(\frac{\omega}{\lambda_i})}{-\lambda_i -\lambda_k}(1-\alpha_k),
    \end{array}
    \label{eq:h2werror1}
\end{equation}
where $D_e = D - D_\alpha$. 

Similarly to the $\mathcal{H}_2$ case, the error \eqref{eq:h2werror1} is also quadratic with respect to the optimisation variables. Indeed, first, note that the terms within the sums of $E_{3,\omega}$ can be rearranged so that $tr(\Phi_i \Phi_k^T) atan(\frac{\omega}{\lambda_i})$ is replaced by 
$tr(\Phi_i \Phi_k^H) \frac{1}{2} ( atan(\frac{\omega}{\lambda_i}) + atan(\frac{\omega}{\lambda_k^*}))$. Therefore, $E_{3,\omega}$ is similar to \eqref{eq:h2err1} and can be written as 
\begin{equation}
    E_{3,\omega} = (\mathds{1} - \alpha)^T Q_\omega (\mathds{1} - \alpha),
\end{equation}
where $Q_\omega$ is given by equation \eqref{eq:Qwmat}. Then, by using vectorisation, $E_{1,\omega}$ can be transformed as follows,
\begin{equation}
\begin{array}{rcl}
E_{1,\omega} &=& \frac{\omega}{\pi}\left (  tr(D^TD) - 2 tr(D^T D_\alpha)  + tr(D_\alpha^T D_\alpha) \right )\\
 &=&  \frac{\omega}{\pi} \left (d^T d - 2 d^T d_\alpha + d_\alpha^T d_\alpha \right ).
 \end{array}
\end{equation}
Finally, using the same vectorisation process, the last element divides into a quadratic part and a linear part,
\begin{equation}
    E_{2,\omega} = (\mathds{1} - \alpha)^T (U_{\omega} d_\alpha+ f_\omega ),
\end{equation}
where, for $i=1,\ldots,n$,
\begin{equation}
\begin{array}{rcl}
e_i^T f_\omega &=& -\frac{2}{\pi} tr(\Phi_i D^T) atan(\frac{\omega}{\lambda_i}),\\
    e_i^T U_{\omega} &=& -\frac{2}{\pi} atan(\frac{\omega}{\lambda_i}) vec(\Phi_i)^T .
    \end{array}
\end{equation}
By considering $\bar{\alpha}$ and stacking it with $d_\alpha$, the final structure of the approximation error appears,
\begin{equation}
    \Vert E \Vert_{2,\omega}^2 = \mat{cc}{\bar{\alpha}^T&d_\alpha^T}^T \mat{cc}{Q_\omega & \frac{1}{2} U_\omega\\
     \frac{1}{2} U_\omega^T& \frac{\omega}{\pi} I_{n_y n_u}}
     \mat{c}{\bar{\alpha}\\ d_\alpha}
     +\mat{cc}{f_\omega^T&-2\frac{\omega}{\pi}d^T}\mat{c}{\bar{\alpha}\\ d_\alpha} + \frac{\omega}{\pi}d^Td
\end{equation}
The constant part can be discarded and the constraints remain the same as in \cref{pb:initbin} with the additional optimisation variables $D_\alpha \in \mathbb{R}^{n_y \times n_u}$.

As $\Vert \cdot \Vert_{2,\omega}$ is a norm for rational functions, the objective is strictly convex making the overall optimisation problem a (strictly) convex mixed quadratic program. 
\end{proof}

\paragraph{About the initialisation}Similarly to the $\mathcal{H}_2$ case, the frequency-limited norm of the approximation error can be upper bounded as shown in \cref{prop:h2wbound}. 
Subsystems with highest individual norm may be selected initially. Note that unlike the $\mathcal{H}_2$-norm case which is parameter free, here, some subsystems may become more relevant depending on the value of the frequency bound $\omega$ as illustrated in \cref{ex:h2w}.

\begin{proposition}
\label{prop:h2wbound}
The $\mathcal{H}_{2,\omega}$-norm of the truncation error $E_\alpha$ is bounded by the sum of the discarded subsystems norms,
\begin{equation}
    \Vert E_\alpha \Vert_{2,\omega} \leq \sum_{i\in \Es} \Vert H_i \Vert_{2,\omega},
    \label{eq:h2wbound}
\end{equation}
where the individuals subsystems norms are given as 
\begin{equation}
    \Vert H_i \Vert_{2,\omega} = \frac{\Vert \Phi_i \Vert_F}{\sqrt{-2 Re(\lambda_i)}} \sqrt{-\frac{2}{\pi} Re(atan(\frac{\omega}{\lambda_i}))}. 
    \label{eq:h2wcrit_poles}
\end{equation}
\end{proposition}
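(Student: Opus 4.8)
The plan is to mimic the proof of \cref{prop:h2bound}: apply the triangular inequality to the decomposition of the approximation error $E_\alpha$ as a sum of individual first-order subsystems, and then identify the $\mathcal{H}_{2,\omega}$-norm of each such subsystem in closed form.

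First I would write the truncation error as $E_\alpha(s) = D - D_\alpha + \sum_{i\in\Es} \frac{\Phi_i}{s-\lambda_i}$, and in the standard modal truncation setting where $D_\alpha = D$ this reduces to $E_\alpha(s) = \sum_{i\in\Es} H_i(s)$ with $H_i(s) = \frac{\Phi_i}{s-\lambda_i}$. Since $\Vert\cdot\Vert_{2,\omega}$ is a genuine norm on rational functions (as noted right after \cref{prop:h2wpr}), the triangle inequality gives directly $\Vert E_\alpha\Vert_{2,\omega} \le \sum_{i\in\Es}\Vert H_i\Vert_{2,\omega}$, which is exactly \eqref{eq:h2wbound}.

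The remaining step is to evaluate $\Vert H_i\Vert_{2,\omega}$ for a single first-order block. I would do this by specialising \eqref{eq:prh2w} to the one-term case: with $D=0$ and a single pole $\lambda_i$ with residue $\Phi_i$, we have $H_i(-\lambda_i) = \frac{\Phi_i}{-\lambda_i-\lambda_i} = -\frac{\Phi_i}{2\lambda_i}$, so $tr(\Phi_i H_i(-\lambda_i)^T) = -\frac{tr(\Phi_i\Phi_i^T)}{2\lambda_i} = -\frac{\Vert\Phi_i\Vert_F^2}{2\lambda_i}$ (using that $\Phi_i$ has rank one, $tr(\Phi_i\Phi_i^T) = \Vert\Phi_i\Vert_F^2$, which is real for a rank-one outer product $c_i b_i^T$). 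Plugging into \eqref{eq:prh2w} with $\Vert D\Vert_F = 0$ yields $\Vert H_i\Vert_{2,\omega}^2 = -\frac{2}{\pi}\left(-\frac{\Vert\Phi_i\Vert_F^2}{2\lambda_i}\right)atan(\frac{\omega}{\lambda_i}) = \frac{\Vert\Phi_i\Vert_F^2}{\pi\lambda_i}atan(\frac{\omega}{\lambda_i})$, which after taking real parts (the left side being real and nonnegative) rearranges to the square of \eqref{eq:h2wcrit_poles}. Some care is needed to package the $\frac{1}{\lambda_i}$ and the $atan$ factor so that the expression matches the stated form $\frac{\Vert\Phi_i\Vert_F}{\sqrt{-2Re(\lambda_i)}}\sqrt{-\frac{2}{\pi}Re(atan(\frac{\omega}{\lambda_i}))}$ — in particular, one uses that for a complex-conjugate pair the imaginary parts cancel in the sum so only the real part of each term survives, which is why the individual-subsystem bound is phrased with $Re(\cdot)$.

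The main obstacle, such as it is, is the bookkeeping around complex-conjugate pairs and the branch of $atan$: for a complex pole $\lambda_i$ the quantity $\frac{\Vert\Phi_i\Vert_F^2}{\pi\lambda_i}atan(\frac{\omega}{\lambda_i})$ is complex, and only when it is added to its conjugate partner does one get a real contribution equal to twice its real part; writing $\frac{\Vert\Phi_i\Vert_F^2}{\pi\lambda_i}atan(\frac{\omega}{\lambda_i}) = \frac{\Vert\Phi_i\Vert_F^2}{\pi}\cdot\frac{1}{\lambda_i}atan(\frac{\omega}{\lambda_i})$ and checking that $Re\big(\frac{1}{\lambda_i}atan(\frac{\omega}{\lambda_i})\big) = \frac{-Re(atan(\omega/\lambda_i))\cdot(-Re(\lambda_i))}{|\lambda_i|^2}\cdot(\text{sign cleanup})$ is the slightly delicate algebraic identity underlying the stated formula. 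Everything else is a direct consequence of the triangle inequality and of \cref{prop:h2wpr}.
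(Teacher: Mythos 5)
Your first step is fine and is exactly the paper's argument: with $D_\alpha = D$ the error is $E_\alpha = \sum_{i\in\Es} H_i$ and the triangle inequality for $\Vert\cdot\Vert_{2,\omega}$ gives \eqref{eq:h2wbound}. The gap is in your evaluation of the individual norm $\Vert H_i\Vert_{2,\omega}$ for a \emph{complex} pole. Two steps there do not hold. First, $tr(\Phi_i\Phi_i^T) = (b_i^Tb_i)(c_i^Tc_i)$ is a complex number in general; it equals $\Vert\Phi_i\Vert_F^2$ only for real residues, since the Frobenius norm is $tr(\Phi_i\Phi_i^H)$, with a conjugate transpose. Second, \eqref{eq:prh2w} cannot be specialised verbatim to a one-term system with a complex pole: that formula is stated for a model in DCF whose conjugate terms are all present so that the total sum is real, and the single-term expression $\frac{tr(\Phi_i\Phi_i^T)}{\pi\lambda_i}\,atan(\omega/\lambda_i)$ you obtain is complex while the true norm is real — so the two are simply not equal, and "taking real parts" of a non-identity is not a legitimate repair. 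Your closing remark that reality is recovered "only when added to the conjugate partner" concerns the norm of the conjugate \emph{pair}, not the individual subsystem, so it cannot deliver the per-index formula \eqref{eq:h2wcrit_poles}; and the algebraic identity you sketch for $Re\bigl(\tfrac{1}{\lambda_i}atan(\tfrac{\omega}{\lambda_i})\bigr)$ is false (the cross term $Im(1/\lambda_i)\,Im(atan(\omega/\lambda_i))$ does not vanish) and in any case would not produce the factor $1/(-2Re(\lambda_i))$ appearing in \eqref{eq:h2wcrit_poles}.

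The correct route — and what the paper means by "reordering the terms in the sums to pair complex conjugate components" — is to work with the hermitian pairing rather than the transpose one: either compute directly $\Vert H_i\Vert_{2,\omega}^2 = \frac{1}{2\pi}\int_{-\omega}^{\omega}\Vert\Phi_i\Vert_F^2\,|\imath\nu-\lambda_i|^{-2}\,d\nu$ and carry out the scalar integration, or replace $\{\lambda_k,\Phi_k^T\}$ by their conjugates as in the proof of \cref{thm:h2w}, so that the single-term contribution reads $\frac{tr(\Phi_i\Phi_i^H)}{-\lambda_i-\lambda_i^*}\cdot\bigl(-\tfrac{1}{\pi}\bigr)\bigl(a_\omega(\lambda_i)+a_\omega(\lambda_i^*)\bigr) = \frac{\Vert\Phi_i\Vert_F^2}{-2Re(\lambda_i)}\cdot\bigl(-\tfrac{2}{\pi}Re\,a_\omega(\lambda_i)\bigr)$, i.e. exactly the diagonal entry $[Q_\omega]_{i,i}$ of \eqref{eq:Qwmat} and the square of \eqref{eq:h2wcrit_poles}. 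With that substitution your argument goes through; as written, the derivation is only valid for real poles.
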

\begin{proof}
The triangular inequality applied to the approximation error leads to the bound. The $\mathcal{H}_{2,\omega}$-norm of each individual system is obtained by expanding $H(-\lambda_i)$ in \eqref{eq:prh2w} and reordering the terms in the sums to pair complex conjugate components.
\end{proof}

\begin{example}
\label{ex:h2w}
Let us consider the $4$-th order model $H = H_1 + H_2$ with
\begin{equation}
\begin{array}{rcl}
    H_1(s) &=& \displaystyle \frac{2.2}{s+0.1} + \frac{1.2}{s + 0.2}\\
    &&\\
    H_2(s) &=& \displaystyle\frac{1.2}{s^2/10^4 + 0.02 s/100 + 1}
    \end{array}
\end{equation}
To reduce the system to an order $2$, there is only $\kappa(2,1,2) = 2$ solutions in the decision tree. Each coincides either with $H_1$ or with $H_2$. The $\mathcal{H}_{2,\omega}$-norm of the approximation errors are computed for varying values of the frequency bound $\omega$ ranging from $10^{-2}$ to $10^3$ and are reported in \cref{fig:h2werr} (top) together with the value of the heuristic criterion \eqref{eq:h2wcrit_poles} associated with each mode (bottom).

One can see that $H_1$ is dominant (the error is lower) for low values of $\omega$ while $H_2$ becomes dominant after $100\,rad/s$. Besides, as shown by the bottom figure in that simple case, the sorting criterion \eqref{eq:h2wcrit_poles} is coherent with the optimal result. An uncertainty area appears just before $100\,rad/s$ where the criteria for the complex eigenvalues crosses one of the real pole but not the other one.

\begin{figure}
    \centering
    \includegraphics[width=\linewidth]{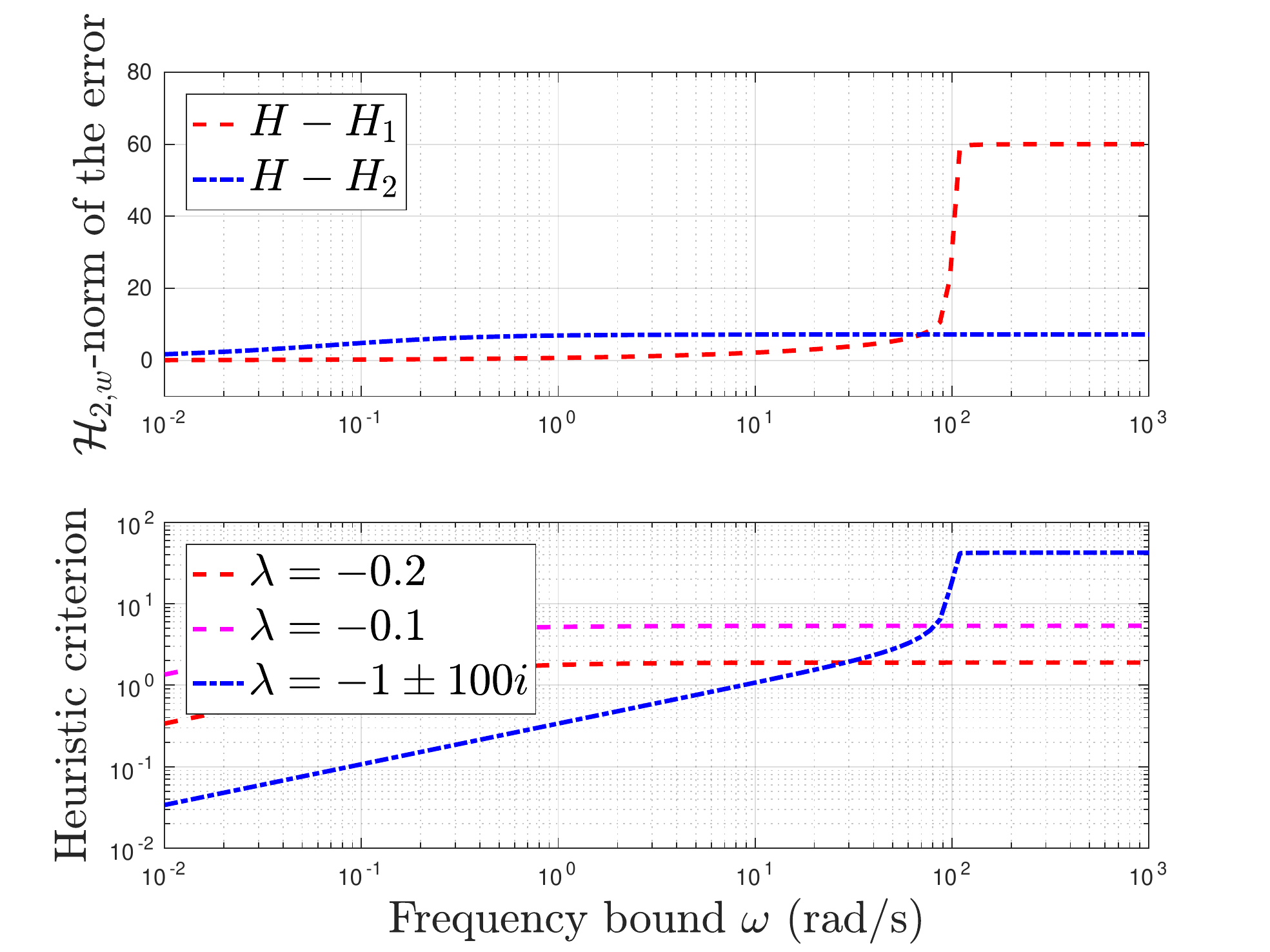}
    \caption{$\mathcal{H}_{2,\omega}$-norm of the approximation error (top) and heuristic sorting criteria associated with each eigenvalue (bottom).}
    \label{fig:h2werr}
\end{figure}

\end{example}

\subsection{In $h_{2,\tau}$-norm}
\label{ssec:h2topt}
As the $\mathcal{H}_2$ case, the direct feedthrough is here constrained to be equal to $D$ so that the resulting optimal $h_{2,\tau}$ modal truncation problem reduces to a convex binary quadratic problem as stated in \cref{thm:h2tpb}. 
\begin{theorem}[Optimal $h_{2,\tau}$ modal truncation]
\label{thm:h2tpb}
Considering the notations of \cref{pb:initbin}, the $r$-th order optimal $h_{2,\tau}$ modal truncation model $H_\alpha^\star$ is such that $D_\alpha^\star = D $ and $\alpha^\star$ is the solution of the following convex quadratic binary problem,
\begin{equation} 
\begin{array}{lc}
     \displaystyle \min_{\alpha}&   (\mathds{1} - \alpha)^T Q_\tau (\mathds{1} - \alpha)\\
     s.t.&
     \begin{array}[t]{rcl}
     \\
     \alpha&\in&\{ 0,1\}^n \\
     \mathds{1}^T \alpha&=&r\\
     M \alpha&=&0
     \end{array}
\end{array}
\label{eq:pbh2t}
\end{equation}
where $Q_\tau \in \mathbb{C}^{n\times n}$ is a hermitian matrix which entries are given, for $i,j =1,\ldots,n$, as,
\begin{equation}
     [Q_\tau]_{i,j}= (1 - e^{(\lambda_i + \lambda_j^*) \tau})  [Q]_{i,j}.
    \label{eq:Qtmat}
\end{equation}
\end{theorem}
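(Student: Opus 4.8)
The plan is to replicate, almost verbatim, the argument used for \cref{thm:h2pb}, substituting the time-limited norm formula \eqref{eq:h2tpr} for the $\mathcal{H}_2$ one \eqref{eq:prh2}. First I would fix the feedthrough: if $D_\alpha \neq D$ then the impulse response of $E_\alpha = H - H_\alpha$ carries a term $(D-D_\alpha)\delta(t)$ whose square is not integrable on $[0,\tau]$, so $\Vert E_\alpha\Vert_{2,\tau}$ is finite only when $D_\alpha = D$. This removes $D_\alpha$ from the decision variables and leaves the strictly proper error $E_\alpha(s) = \sum_{i\in\Is}(1-\alpha_i)\frac{\Phi_i}{s-\lambda_i}$ exactly as in \eqref{eq:Ea}.

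Next I would apply \cref{prop:h2tpr} to $E_\alpha$, whose residues are $(1-\alpha_i)\Phi_i$, giving
\[
\Vert E_\alpha \Vert_{2,\tau}^2 = \sum_{i,k\in\Is}(1-\alpha_i)(1-\alpha_k)\frac{tr(\Phi_i\Phi_k^T)}{\lambda_i+\lambda_k}\bigl(e^{(\lambda_i+\lambda_k)\tau}-1\bigr).
\]
The identity $\frac{e^{(\lambda_i+\lambda_k)\tau}-1}{\lambda_i+\lambda_k}=(1-e^{(\lambda_i+\lambda_k)\tau})\frac{1}{-\lambda_i-\lambda_k}$ factors each summand as $(1-e^{(\lambda_i+\lambda_k)\tau})$ times the coefficient $\frac{tr(\Phi_i\Phi_k^T)}{-\lambda_i-\lambda_k}$ already appearing in the $\mathcal{H}_2$ error \eqref{eq:h2err_iqp}. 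Then, exactly as in the proof of \cref{thm:h2pb}, since every complex pole-residue pair comes with its conjugate, the double sum is reindexed so that $\{\lambda_k,\Phi_k^T\}$ is replaced by $\{\lambda_k^*,\Phi_k^H\}$; recognising $\frac{tr(\Phi_i\Phi_k^H)}{-\lambda_i-\lambda_k^*}=[Q]_{i,k}$, the summand becomes $(1-e^{(\lambda_i+\lambda_k^*)\tau})[Q]_{i,k}=[Q_\tau]_{i,k}$ with $Q_\tau$ as in \eqref{eq:Qtmat}, so that $\Vert E_\alpha\Vert_{2,\tau}^2=(\mathds{1}-\alpha)^T Q_\tau(\mathds{1}-\alpha)$. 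Coupling this objective with the order constraint \eqref{eq:cst_order}, the realness constraint \eqref{eq:cst_realness} and the binary constraint yields \eqref{eq:pbh2t}; as for $Q$ in \cref{thm:h2pb}, $Q_\tau$ may be replaced by its Hermitian part $\frac{1}{2}(Q_\tau+Q_\tau^H)$ without altering the objective on $\{M\alpha=0\}$, so a real-arithmetic solver can be used.

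It remains to argue convexity, which is the only delicate point. For every real vector $\alpha$ satisfying $\mathds{1}^T\alpha=r$ and $M\alpha=0$, the quadratic form $(\mathds{1}-\alpha)^T Q_\tau(\mathds{1}-\alpha)$ equals $\Vert E_\alpha\Vert_{2,\tau}^2\ge 0$, where $E_\alpha$ is a rational function depending linearly on $\mathds{1}-\alpha$; since $h_{2,\tau}$ is a genuine norm (not merely a semi-norm) on the impulse responses of rational functions, this form is positive definite on the feasible affine subspace, so the continuous relaxation of \eqref{eq:pbh2t} is a strictly convex quadratic program with linear constraints and \eqref{eq:pbh2t} is a binary convex quadratic problem amenable to branch and bound just like \eqref{eq:pbh2}. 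I expect the main obstacle to be making this last step fully clean: one reads off directly only nonnegativity of the form, and only on the affine set carved out by the constraints, so the tidiest route is to observe — as the paper already does for $\mathcal{H}_{2,\omega}$ — that convexity on that set is all the branch-and-bound relaxation needs, with definiteness supplied by the norm property of $h_{2,\tau}$ on the relevant rational impulse responses.
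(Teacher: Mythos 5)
Your proposal is correct and follows essentially the same route as the paper's proof: constrain $D_\alpha = D$ for finiteness, apply \cref{prop:h2tpr} to the error \eqref{eq:Ea}, and reorder the double sum over conjugate pairs (valid under $M\alpha=0$) to obtain the quadratic form $(\mathds{1}-\alpha)^T Q_\tau (\mathds{1}-\alpha)$ with $Q_\tau$ as in \eqref{eq:Qtmat}. The only difference is that you spell out the reindexing identity, the Hermitian symmetrisation and the convexity argument that the paper compresses into ``after reordering of the elements,'' at the same level of rigour the paper itself uses for \cref{thm:h2pb}.
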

\begin{proof}
As in the $\mathcal{H}_2$ case, the direct feedthrough $D_\alpha$ of the reduced-order model $H_\alpha$ must be equal to $D$. The error is then the same as in equation \eqref{eq:Ea} which, combined with the poles-residues expression of the $h_{2,\tau}$-norm \eqref{eq:h2tpr} leads to 
\begin{equation}
   \small
    \Vert e_\alpha \Vert_{2,\tau}^2 = \sum_{i,k \in \Is} (1- \alpha_i) \frac{tr(\Phi_i \Phi_k^T)}{\lambda_i + \lambda_k} (e^{(\lambda_i + \lambda_k) \tau}  - 1) (1 - \alpha_k),
\end{equation}
where $e_\alpha(t) = \mathcal{L}^{-1}(E_\alpha)(t)$. Again, the norm of the approximation error exhibits a quadratic structure and may be reformulated as 
\begin{equation}
    \Vert e_\alpha \Vert_{2,\tau}^2 = (1 - \alpha)^T Q_\tau (1-\alpha),
\end{equation}
where the entries of the matrix $Q_\tau$ are given, after reordering of the elements, by \eqref{eq:Qtmat}.
\end{proof}

\paragraph{About the initialisation} In \cref{prop:h2tbound}, an upper bound of the approximation error that can motivate the selection of the initial poles-residues is presented.
\begin{proposition}
\label{prop:h2tbound}
The $h_{2,\tau}$-norm of the approximation error $\mathcal{L}^{-1}(E_\alpha) = e_\alpha$ is bounded by the sum of the discarded subsystems norms,
\begin{equation}
    \Vert e_\alpha \Vert_{2,\tau} \leq \sum_{i\in \Es} \Vert h_i \Vert_{2,\tau},
    \label{eq:h2tbound}
\end{equation}
where the individuals subsystems norms are given as 
\begin{equation}
    \Vert h_i \Vert_{2,\tau} = 
    \frac{\Vert \Phi_i\Vert_F}{\sqrt{-2 Re(\lambda_i)}}\sqrt{1 - e^{2Re(\lambda_i)\tau}}.
    \label{eq:h2tcrit_poles}
\end{equation}
\end{proposition}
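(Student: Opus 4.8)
The plan is to mirror the proofs of \cref{prop:h2bound} and \cref{prop:h2wbound}: the bound is a direct consequence of the triangle inequality, and the only actual computation is evaluating the $h_{2,\tau}$-norm of a single modal component.

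First I would express the discarded part of the model in the time domain. In the $h_{2,\tau}$ setting the direct feedthrough is constrained to $D_\alpha = D$, so, by \eqref{eq:Ea} and the impulse-response identity used in the proof of \cref{prop:h2tpr}, one has $e_\alpha(t) = \mathcal{L}^{-1}(E_\alpha)(t) = \sum_{i\in\Es} h_i(t)$ with $h_i(t) = \Phi_i e^{\lambda_i t}$, the impulse response of the elementary term $H_i(s) = \Phi_i/(s-\lambda_i)$. Since $\Vert\cdot\Vert_{2,\tau}$ is a genuine seminorm on the space of square-integrable matrix-valued functions on $[0,\tau]$ (it is induced by $\langle f,g\rangle = \int_0^\tau tr(f(t) g(t)^H)\,dt$), it is subadditive, hence
\[
\Vert e_\alpha \Vert_{2,\tau} = \left\Vert \sum_{i\in\Es} h_i \right\Vert_{2,\tau} \le \sum_{i\in\Es} \Vert h_i \Vert_{2,\tau}.
\]

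Next I would compute $\Vert h_i\Vert_{2,\tau}$ explicitly. By definition and using $|e^{\lambda_i t}|^2 = e^{2Re(\lambda_i)t}$,
\[
\Vert h_i \Vert_{2,\tau}^2 = \int_0^\tau \Vert \Phi_i e^{\lambda_i t}\Vert_F^2\, dt = \Vert \Phi_i\Vert_F^2 \int_0^\tau e^{2Re(\lambda_i)t}\,dt = \Vert \Phi_i\Vert_F^2\,\frac{e^{2Re(\lambda_i)\tau}-1}{2Re(\lambda_i)}.
\]
Stability of $H$ gives $Re(\lambda_i)<0$, so $-2Re(\lambda_i)>0$ and rearranging yields $\Vert h_i\Vert_{2,\tau}^2 = \frac{\Vert \Phi_i\Vert_F^2}{-2Re(\lambda_i)}\big(1 - e^{2Re(\lambda_i)\tau}\big)$; taking square roots gives \eqref{eq:h2tcrit_poles}. (Equivalently, the same value follows from \eqref{eq:h2tpr} specialised to the index $i$ together with its complex conjugate, after identifying $tr(\Phi_i\Phi_i^H) = \Vert \Phi_i\Vert_F^2$ to collapse the cross terms; but the direct integration is shorter.) Combining the two displays establishes \eqref{eq:h2tbound}.

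There is no genuine obstacle here; the only point requiring care is the legitimacy of the triangle-inequality step when the $h_i$ are complex-valued functions that do not individually correspond to real state-space realisations. This is harmless: $\Vert\cdot\Vert_{2,\tau}$ is defined as an honest seminorm on $\mathcal{L}_2([0,\tau])$ irrespective of realness (it is even a norm on the rational-function impulse responses considered here), so subadditivity applies termwise and the indices of $\Es$ — which come in conjugate pairs — simply all appear on the right-hand side without affecting the bound.
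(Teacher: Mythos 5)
Your proof is correct and is essentially the argument the paper intends here (the proposition is stated without proof, mirroring \cref{prop:h2bound} and \cref{prop:h2wbound}): the triangle inequality for the $\Vert\cdot\Vert_{2,\tau}$ seminorm applied to $e_\alpha=\sum_{i\in\Es}h_i$, plus a per-term norm evaluation, and your direct integration of $\Vert \Phi_i e^{\lambda_i t}\Vert_F^2$ recovers \eqref{eq:h2tcrit_poles} exactly. The direct-integration route is also the cleaner justification for complex $\Phi_i$, since \eqref{eq:h2tpr} as written (with transposes rather than conjugate transposes) presumes a real impulse response, so your parenthetical alternative via that formula would need the conjugate-pair bookkeeping spelled out; as an aside it does not affect the validity of the proof.
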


\subsection{In $\mathcal{H}_\infty$-norm}
\label{ssec:hiopt}
The structure of the $\mathcal{H}_\infty$-norm makes the associated optimal modal truncation problem quite different from the previous ones. Indeed, considering \cref{pb:initbin} with the $\mathcal{H}_\infty$-norm leads to a convex, albeit non-smooth, mixed optimisation problem. While a general purpose convex optimizer may be used to solve the convex relaxation in a branch and bound process, the specific structure of the $\mathcal{H}_\infty$-norm can be exploited to derive an alternative equivalent problem which may be more direct to address. The latter in detailed in \cref{thm:hinf}.

\begin{theorem}[Optimal $\mathcal{H}_\infty$ modal truncation] 
\label{thm:hinf}
Let us consider the complex diagonal form of the approximation error $E_\alpha$,
\begin{equation}
    A_e = \Delta,\, B_e = B_\Delta,\, C_e = C_\Delta \Delta_\alpha,\, D_e = D - D_\alpha,
    \label{eq:errrea}
\end{equation}
with $\Delta_\alpha = I_n - diag(\alpha_1,\ldots,\alpha_n)$. Then the $r$-th order optimal $\mathcal{H}_\infty$ modal truncation model $H_\alpha^\star$ is given by the solution of the following mixed SDP,
\begin{equation}
    \begin{array}{cc}
    \displaystyle \min_{\gamma, P, \alpha, D_\alpha} &\displaystyle \gamma \\
    s.t.&
    \begin{array}[t]{rcl}
        &&\\
        \alpha& \in & \{0,1\}^n \\
        D_\alpha&\in&\mathbb{R}^{n_y\times n_u}\\
        \gamma &\in &\mathbb{R}_+\\
        P &\in &\mathbb{S}^{n}_{++}\\
        \mathds{1}^T \alpha &=&r\\
        M \alpha &=&0 \\
    M_e(\alpha) &\prec& 0
    \end{array}
    \end{array}
    \label{eq:pbinf3}
\end{equation}
where the last constraint is a Linear Matrix Inequality (LMI) characterised by the matrix
\begin{equation}
M_e(\alpha) = 
\left [
    \begin{array}{ccc}
    A_e^T P + P A_e & P B_e & C_e^T\\
    \star& - \gamma I & D_e^T \\ 
    \star& \star& -\gamma I 
    \end{array}
\right ].
\label{eq:lastLMI}
\end{equation}
\end{theorem}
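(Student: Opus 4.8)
The plan is to reduce the optimal $\mathcal{H}_\infty$ modal truncation problem to an equivalent mixed-integer SDP by applying the Bounded Real Lemma to the parametrised error system $E_\alpha$. First I would recall from \eqref{eq:Ea} that, for a fixed binary vector $\alpha$ and a fixed direct feedthrough $D_\alpha$, the error model $E_\alpha = H - H_\alpha$ admits the realisation \eqref{eq:errrea}: it shares the diagonal dynamics $\Delta$ and input matrix $B_\Delta$ of the DCF, its output matrix is $C_\Delta \Delta_\alpha$ with $\Delta_\alpha = I_n - \mathrm{diag}(\alpha_1,\ldots,\alpha_n)$ zeroing out the retained modes, and its feedthrough is $D - D_\alpha$. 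Because $\alpha_i\in\{0,1\}$, $\Delta_\alpha$ is a projector onto the discarded modes, so this realisation indeed represents $\sum_{i\in\Es}\Phi_i/(s-\lambda_i) + D - D_\alpha$, matching $E_\alpha$. One should also note that $\Delta$ is Hurwitz (by the standing stability assumption on $H$), so the Bounded Real Lemma applies even though $E_\alpha$ is expressed in complex coordinates — a real change of basis brings it to a genuine real realisation without changing the transfer function, hence without changing the $\mathcal{H}_\infty$-norm or the LMI feasibility.

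Next I would invoke the Bounded Real Lemma (e.g. in the form of \cite{Scherer:1997}): for a stable LTI system with realisation $(A_e,B_e,C_e,D_e)$, one has $\Vert E_\alpha\Vert_\infty < \gamma$ if and only if there exists $P\in\mathbb{S}^n_{++}$ such that the $3\times 3$ block matrix $M_e(\alpha)$ in \eqref{eq:lastLMI} is negative definite. Substituting the error realisation \eqref{eq:errrea} gives $A_e^T P + P A_e = \Delta^H P + P\Delta$ in the $(1,1)$ block, $P B_\Delta$ in the $(1,2)$ block, and $\Delta_\alpha C_\Delta^H$ in the $(1,3)$ block, with $(D-D_\alpha)^H$ in the $(2,3)$ block and $-\gamma I$ on the remaining diagonal blocks. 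Thus minimising $\Vert H - H_\alpha\Vert_\infty$ over the feasible set of \cref{pb:initbin} is exactly the same as minimising $\gamma$ subject to the existence of such a $P$, the binary constraint $\alpha\in\{0,1\}^n$, the order constraint $\mathds{1}^T\alpha = r$, the realness constraint $M\alpha = 0$, and the LMI $M_e(\alpha)\prec 0$. Equivalence of the optima (rather than just of the feasible sets) follows because the infimum of $\gamma$ over BRL-feasible $P$ equals $\Vert E_\alpha\Vert_\infty$ for each fixed $\alpha$, and we then minimise over the finitely many admissible $\alpha$; strictness of the inequality versus the supremum in $\Vert\cdot\Vert_\infty$ is handled in the usual way, since the infimum over $\gamma$ is attained in the limit and the discrete outer minimisation is over a finite set.

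The one point requiring care — and the main obstacle — is that $M_e(\alpha)$ is, strictly speaking, affine in $\alpha$ only through $\Delta_\alpha$ appearing linearly in the $(1,3)$ block; since $\alpha$ is a decision variable and $P$ is also a decision variable, the $(1,1)$ block $\Delta^H P + P\Delta$ is linear in $P$ and independent of $\alpha$, so the whole matrix is jointly linear in $(P,\alpha,D_\alpha,\gamma)$ — there is no bilinear coupling, which is what makes \eqref{eq:pbinf3} a genuine mixed-integer \emph{SDP} rather than a BMI. I would spell this out explicitly: the LMI constraint is linear in all continuous variables for each fixed $\alpha$, linear in $\alpha$ for each fixed $(P,D_\alpha,\gamma)$, and in fact jointly linear, so the convex relaxation obtained by relaxing $\alpha\in\{0,1\}^n$ to $\alpha\in[0,1]^n$ is a bona fide SDP. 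Finally I would close by remarking that the objective $\gamma$ is linear and that the only combinatorial aspect is the binary vector $\alpha$, so the problem fits the mixed-integer SDP template amenable to branch-and-bound, completing the proof.
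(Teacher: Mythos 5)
Your proof is correct and follows essentially the same route as the paper: introduce the slack variable $\gamma$, use the error realisation \eqref{eq:errrea} with the $\alpha$-dependence confined to $C_e$ and $D_e$, and apply the Bounded Real Lemma to replace $\Vert E_\alpha\Vert_\infty\leq\gamma$ by the LMI $M_e(\alpha)\prec 0$. The only cosmetic difference is that the paper states the BRL in its quadratic $2\times 2$ form and then Schur-complements to reach \eqref{eq:lastLMI}, whereas you invoke the $3\times 3$ form directly and make the (correct, and worth stating) observation that the resulting constraint is jointly affine in $(P,\alpha,D_\alpha,\gamma)$, so no bilinearity arises.
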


\begin{proof}
As its objective is convex, by adding a slack variable $\gamma \geq 0$, the $\mathcal{H}_\infty$ modal approximation problem can be re-written equivalently (see \cite{boyd:2004:convex}) as 
\begin{equation}
    \begin{array}{cc}
    \displaystyle \min_{\gamma, \alpha, D_\alpha} &\displaystyle \gamma \\
    s.t.&
    \begin{array}[t]{rcl}
        &&\\
        \alpha& \in & \{0,1\}^n \\
        D_\alpha&\in&\mathbb{R}^{n_y\times n_u}\\
        \gamma &\in &\mathbb{R}_+\\
        \mathds{1}^T \alpha &=&r\\
        M \alpha &=&0 \\
        \left \Vert E_\alpha \right \Vert_\infty &\leq &\gamma\\
    \end{array}
    \end{array}
    \label{eq:pbinf2}
\end{equation}
The last constraint can then be transformed even further.  Indeed, let $(\Delta, B_\Delta, C_\Delta, D)$ denote the complex realisation associated with the DCF of $H$ so that $H(s) = C_\Delta(sI_n - \Delta)^{-1} B_\Delta + D$. Equation \eqref{eq:errrea} then represents a complex realisation associated with the approximation error $E_\alpha = H - H_\alpha$.

By using the Bounded Real Lemma (see e.g. \cite{Scherer:1997}), the constraints $\Vert E_\alpha \Vert_\infty \leq \gamma$ can be traded by considering additional slack variables as a symmetric and positive definite matrix $P \in \mathbb{S}^{n}_{++}$ such that the following matrix inequality is satisfied,
\begin{equation}
\left [
    \begin{array}{cc}
    A_e^T P + PA_e + C_e^T C_e  & P B_e + C_e^T D_e \\
    \star               & D_e^T D_e - \gamma^2 I_{n_u}
    \end{array}
\right ] \prec 0.
\end{equation}
The inequality can further be transformed by a Schur complement leading to the LMI $M_e(\alpha) \prec 0$ where $M_e (\alpha) $ given by equation \eqref{eq:lastLMI}.
\end{proof}

The convex relaxation of the binary constraints in \eqref{eq:pbinf3} leads to a SDP which can be easily formulated using a modelling framework such as YALMIP \cite{Lofberg2004} and solved by associated solvers such as SeDuMi \cite{polik2007sedumi}. Still, while a SDP is convex, it remains difficult to solve, especially as the dimension of the problem increases. For these reasons, the optimal $\mathcal{H}_\infty$ modal truncation is restricted to models of moderate dimension $n$. 

The LMI matrix \eqref{eq:lastLMI} is complex-valued. Should the SDP solver only handle real matrices, then an equivalent real-valued matrix should be considered as detailed in  \cref{prop:eqreal}.

\begin{proposition}
\label{prop:eqreal}
Let us consider the set of indexes associated with real poles $\mathcal{I}^R= \{ i \in \Is, \lambda_i \in \mathbb{R}\}$ and the set $\mathcal{I}^C$ defined in \eqref{eq:Icdef}. Let us define the unitary transformation matrix $T \in \mathbb{C}^{n\times n}$ such that
\begin{equation}
    T_{i,i} =1,\,i\in \mathcal{I}^R,
\end{equation}
and such that for $\{i,j\} \in \mathcal{I}^C$,
\begin{equation}
\begin{array}{ccccc}
    T_{i,i} &=& T_{j,i} &=&  1/\sqrt{2},  \\
    T_{i,j} &=& -T_{j,j} &=&  \imath / \sqrt{2}.
\end{array}
\end{equation}
Then, assuming that the constraint $M \alpha = 0$ is satisfied, the complex realisation \eqref{eq:errrea} can be replaced by the following real realisation,
\begin{equation}
    \tilde{A}_e = T^{H} A_e T,\, \tilde{B}_e = T^{H}B_e,\, \tilde{C}_e = C_\Delta T \Delta_\alpha.
\end{equation}
\end{proposition}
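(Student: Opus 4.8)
The plan is to show that $T$ is a block-diagonal unitary matrix and that conjugating the complex error realisation \eqref{eq:errrea} by $T$ both preserves the transfer function $E_\alpha$ and turns every block real.

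First I would record the structure of $T$. After reordering the state so that the indices of $\mathcal{I}^R$ come first and each conjugate pair $\{i,j\}\in\mathcal{I}^C$ is contiguous, $T$ is block diagonal: an identity block of size $|\mathcal{I}^R|$, and on each pair the $2\times 2$ block $T_{\{i,j\}}=\tfrac{1}{\sqrt2}\left[\begin{smallmatrix}1 & \imath\\ 1 & -\imath\end{smallmatrix}\right]$. A one-line computation gives $T_{\{i,j\}}^H T_{\{i,j\}}=I_2$, hence $T^HT=I_n$; so $T$ is unitary and $(T^HA_eT,\,T^HB_e,\,C_eT,\,D_e)$ is again a realisation of $E_\alpha$ (a similarity transform does not change the input-output map, and the feedthrough $D_e=D-D_\alpha$ is left untouched and is already real).

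Next I would compute the transformed blocks. On a real index $i\in\mathcal{I}^R$ nothing changes, and since $A$ is real the associated eigenvector can be chosen real in the construction of \cref{rq:H_practical_decomp}, so $\lambda_i$, $b_i$, $c_i$, $\Phi_i$ and the corresponding rows/columns of the realisation are already real. On a pair $\{i,j\}\in\mathcal{I}^C$ the same real construction of $X$ gives $\lambda_j=\lambda_i^*$, $b_j=b_i^*$ and $c_j=c_i^*$; the three $2\times 2$ products then yield $T^H\Delta T$ restricted to the pair equal to $\left[\begin{smallmatrix}Re(\lambda_i) & -Im(\lambda_i)\\ Im(\lambda_i) & Re(\lambda_i)\end{smallmatrix}\right]$, $T^HB_\Delta$ restricted to rows $i,j$ equal to $\sqrt2\,[\,Re(b_i)\ \ Im(b_i)\,]^T$, and $C_\Delta T$ restricted to columns $i,j$ equal to $\sqrt2\,[\,Re(c_i)\ \ -Im(c_i)\,]$ — all real.

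Finally I would reconcile the stated $\tilde C_e=C_\Delta T\Delta_\alpha$ with the natural similarity transform $C_eT=C_\Delta\Delta_\alpha T$. These agree exactly because the constraint $M\alpha=0$ forces $\alpha_i=\alpha_j$ on every pair of $\mathcal{I}^C$, so $\Delta_\alpha$ restricted to that pair is the scalar matrix $(1-\alpha_i)I_2$, which commutes with $T_{\{i,j\}}$; hence $\Delta_\alpha T=T\Delta_\alpha$ and $C_\Delta T\Delta_\alpha$ is real, being the product of the real matrix $C_\Delta T$ and a real diagonal matrix. Collecting everything, $(\tilde A_e,\tilde B_e,\tilde C_e,D_e)$ is a real realisation of $E_\alpha$ and may replace \eqref{eq:errrea} in the Bounded Real Lemma LMI appearing in \cref{thm:hinf}. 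The main obstacle is not any single step but the bookkeeping: pinning down the conjugacy relations $\lambda_j=\lambda_i^*$, $b_j=b_i^*$, $c_j=c_i^*$ cleanly from the eigendecomposition so that the $2\times2$ blocks really come out real, and noticing that the ordering $C_\Delta T\Delta_\alpha$ is only legitimate under $M\alpha=0$; the matrix products themselves are routine.
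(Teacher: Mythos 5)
Your proposal is correct and follows essentially the same route as the paper: $T$ is the standard unitary complex-to-real transformation of the DCF, and the only subtle point is that $\Delta_\alpha$ commutes with $T$ when $M\alpha=0$ (you argue via the scalar block $(1-\alpha_i)I_2$ on each conjugate pair, the paper via an entrywise idempotency argument), so $C_\Delta\Delta_\alpha T=C_\Delta T\Delta_\alpha$. Your explicit $2\times2$ computations showing the transformed blocks are real (under the conjugate-eigenvector normalisation $\lambda_j=\lambda_i^*$, $b_j=b_i^*$, $c_j=c_i^*$) simply spell out what the paper asserts as the ``standard real-valued realisation.''
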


\begin{proof}
The real matrices $ \tilde{A}_e$, $ \tilde{B}_e$ and $C_\Delta T$ correspond to the standard real-valued realisation associated with the DCF obtained by combining complex conjugate elements. The diagonal matrix $\Delta_\alpha$ acts as a filter on the output matrix which may be applied before or after the transformation to real form provided that the complex pairs are kept together.

Indeed, as long as this is the case, i.e. that the constraint \eqref{eq:cst_realness} is satisfied, the matrices $\Delta_\alpha$ and $T$ commutes. More specifically, as $\Delta_\alpha$ is idempotent, $\Delta_\alpha T = T \Delta_\alpha$ is equivalent to $\Delta_\alpha T (I - \Delta_\alpha) = 0$. Looking at each entry leads to $ \alpha_i T_{i,j} (1 - \alpha_j) = 0$ for $i,j \in \Is$ which is satisfied considering the zero elements of $T$ and the dependencies between the $\alpha_i$ for complex poles. Therefore, $\tilde{C}_e = C_\Delta \Delta_\alpha T =  C_\Delta T \Delta_\alpha$ which leads to the result.
\end{proof}

\section{Numerical illustrations}
\label{sec:app}

In \cref{ssec:acad}, an academic example is used to highlight the different nature of poles dominance depending on the considered norm. Then, a more realistic example is considered in \cref{ssec:domid} for $\mathcal{H}_2$ dominant modes identification.

\subsection{Academic example}
\label{ssec:acad}
\begin{figure}
    \centering
    \includegraphics[width=\linewidth]{./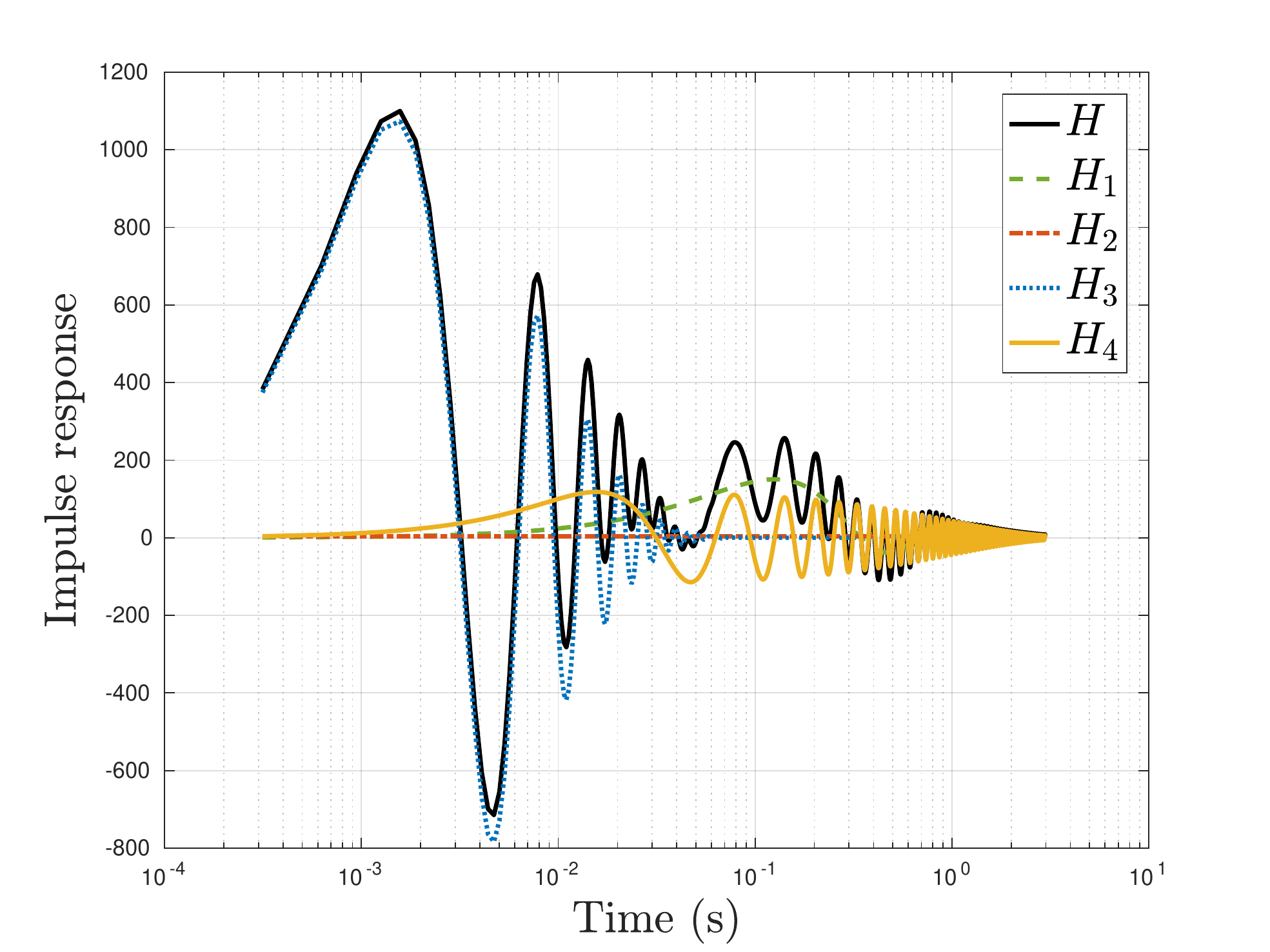}
    \caption{Impulse responses of $H$ and its components with the time axis in log scale.}
    \label{fig:aca_impulse}
\end{figure}

To highlight the differences between the different norms within the context of modal truncation, let us consider the following $8$-th order academic model,
\begin{equation}
    H(s) = H_1(s) + H_2(s) + H_3(s) + H_4(s),
\end{equation}
where the subsystems are given as follows,
\begin{equation}
\begin{array}{rcl}
    H_1(s) &=& \displaystyle \frac{25}{s^2/10^2 + 0.8 s/10 + 1 }, \\
    &&\\
    H_2(s) &=& \displaystyle \frac{2.2}{s+0.1} + \frac{1.2}{s + 0.2},\\
    &&\\
    H_3(s) &=&\displaystyle\frac{1.25}{s^2/10^6 + 0.2s/10^3 s + 1}, \\&&\\
    H_4(s) &=& \displaystyle\frac{1.2}{s^2/10^4 + 0.02 s/100 + 1}.
    \end{array}
\end{equation}
The impulse responses of $H$ and its components are plotted in \cref{fig:aca_impulse}. The gains of their frequency responses are plotted in \cref{fig:aca_bode}. To reduce this model to an order $r=2$ by modal truncation, only $\kappa(2,3,2) = 4$ combinations are possibles. These combinations are actually associated with each subsystem $H_i$.

\begin{figure}
    \centering
    \includegraphics[width=\linewidth]{./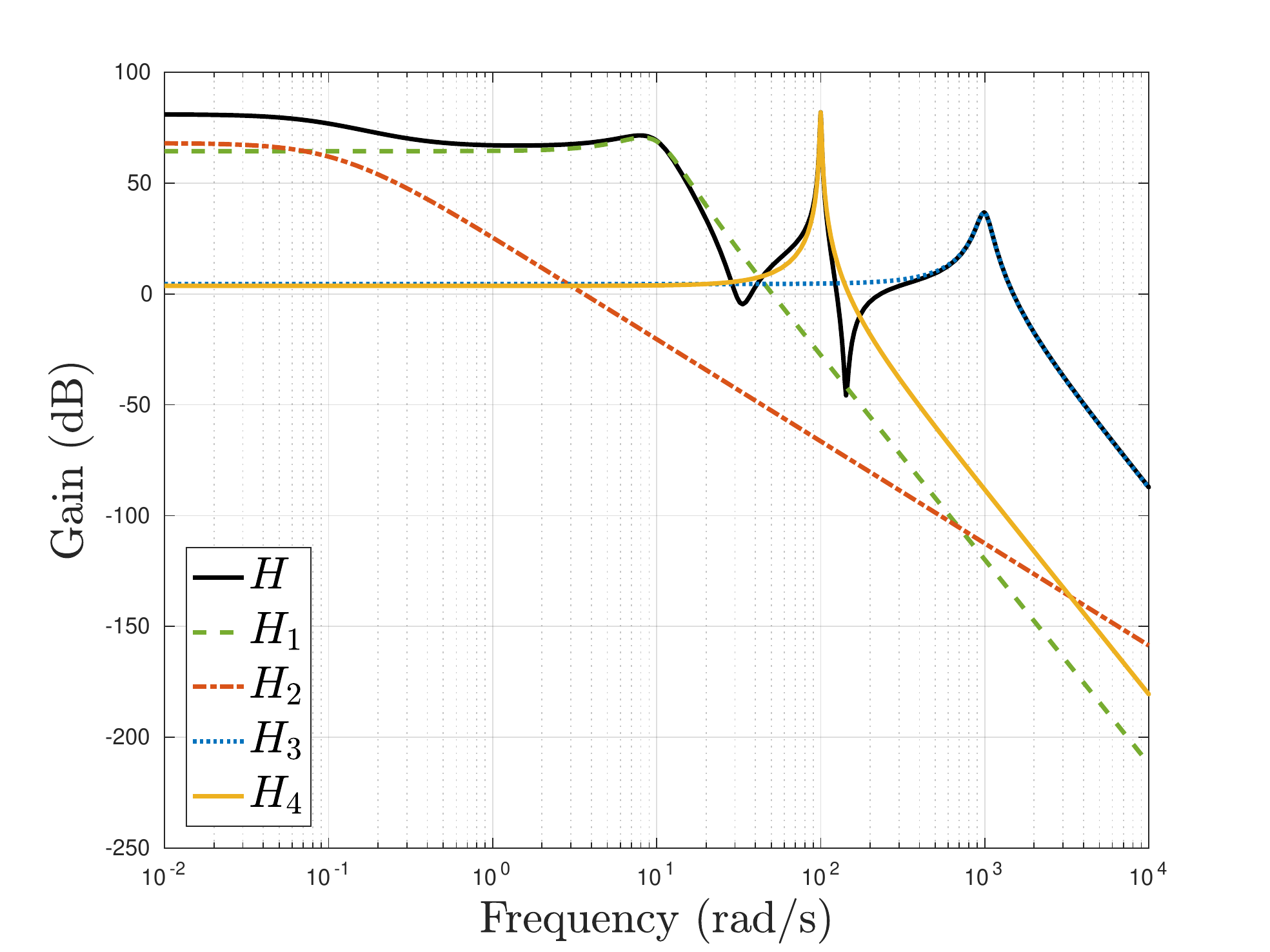}
    \caption{Gain of the frequency responses of $H$ and its components.}
    \label{fig:aca_bode}
\end{figure}

The different norms of the approximation errors are computed and reported in \cref{tab:aca_err} with $\omega = 0.1$ and $T=0.2$. Note that no direct feedthrough is considered here.

In this simple case, the results could, in a sense, be inferred from the gain diagram in \cref{fig:aca_bode}:
\begin{itemize}
\item $H_1$ has visually an important mean contribution and should therefore play an important role in the $\mathcal{H}_2$-norm.
\item $H_2$ has a large contribution only in low frequency below $0.1$rad/s and should therefore be important w.r.t. the $\mathcal{H}_{2,\omega}$-norm. 
\item $H_3$ contains the fastest modes and should therefore be dominant at the beginning of the impulse response thus dominating the $h_{2,T}$-norm when $T =0.2$. This is validated by the impulse responses in \cref{fig:aca_impulse}.
\item $H_4$ has the highest gain and should therefore be dominant w.r.t the $\mathcal{H}_\infty$-norm.
\end{itemize}

However, as illustrated in the next example, such a heuristic analysis is no longer tractable on realistic models and the systematic approach detailed in this work then shows its benefits.

\begin{table}
    \centering
    \begin{tabular}{c|cccc}
    Subsystem\textbackslash Norm& $\mathcal{H}_2$&$\mathcal{H}_{2,\omega}$&$h_{2,T}$&$\mathcal{H}_\infty$\\ \hline
    $H_1$& $\mathbf{87.07}$ & $5.21$            & $71.40$ & $60.07$ \\
    $H_2$& $106.89$         & $\mathbf{4.90}$   & $89.37$&$60.05$ \\
    $H_3$& $88.04$          & $9.26$            & $\mathbf{65.16}$&$60.06$ \\
    $H_4$& $89.74$          & $9.27$            & $84.22$&$\mathbf{56.25}$ \\
    \end{tabular}
    \caption{Norms of the approximation errors $H - H_i$, $i=1,\ldots,4$}
    \label{tab:aca_err}
\end{table}

\subsection{Application to dominant modes selection}
\label{ssec:domid}

\begin{figure}
    \centering
    \includegraphics[width=\linewidth]{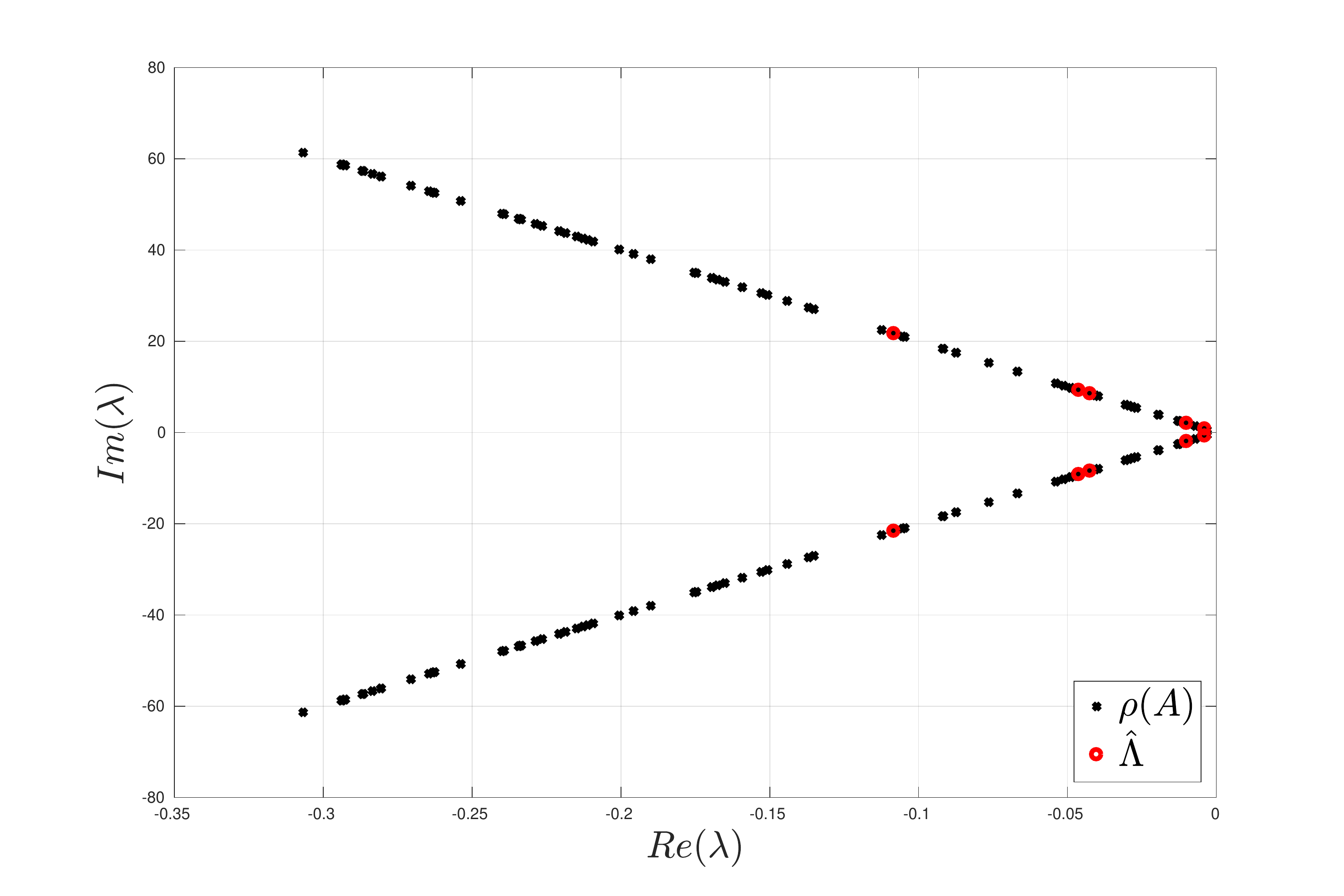}
    \caption{Initial set of poles $\rho(A)$ and subset $\hat{\Lambda}$ containing the $10$ $\mathcal{H}_2$ dominant poles.}
    \label{fig:serious2}
\end{figure}

In this example, let us consider the \texttt{ISS2} model from \cite{complib}. It is a $270$-th order model with $3$ inputs and outputs. It has only complex eigenvalues and the number of combinations for modal approximation grows as $\kappa(0,135,r)$ where $r$ should be chosen even. Assuming that we are looking for the $r=10$ dominant poles, then there are more than $10^{8}$ possible combinations.

The $\mathcal{H}_2$ optimal modal truncation problem is solved with a simple branch and bound algorithm starting from the initialisation scheme suggested in Section \ref{ssec:h2opt}. The set of initial poles $\rho(A)$ are plotted together with the $r$ $\mathcal{H}_2$ dominant ones in \cref{fig:serious2}. The singular values associated with the initial and reduced-order transfer functions are plotted in \cref{fig:serious1}.

The initialisation heuristic choice turns out to be almost optimal in that case as only $3$ nodes lead to an improvement of the $\mathcal{H}_2$ error. This is not surprising since the model represents a highly flexible structure which dominant modes are, in a sense, easily distinguishable due to the large magnitude of some of the associated residues. Indeed, as shown by \cref{fig:serious1}, dominant modes are mainly associated with peaks in the frequency-domain response. However, this may not be as clear in general, especially when the model has some well damped dynamics. 

It is clear from \cref{fig:serious2} that dominant poles are not necessarily the ones with lowest natural frequency and that the associated residues play and important role. Indeed, the dominance must be understood in and input-output sense through the lens of the actuators, sensors and eigenvectors. The pole dominance as defined here is therefore associated with and input-output setting.

\begin{figure}
    \centering
    \includegraphics[width=\linewidth]{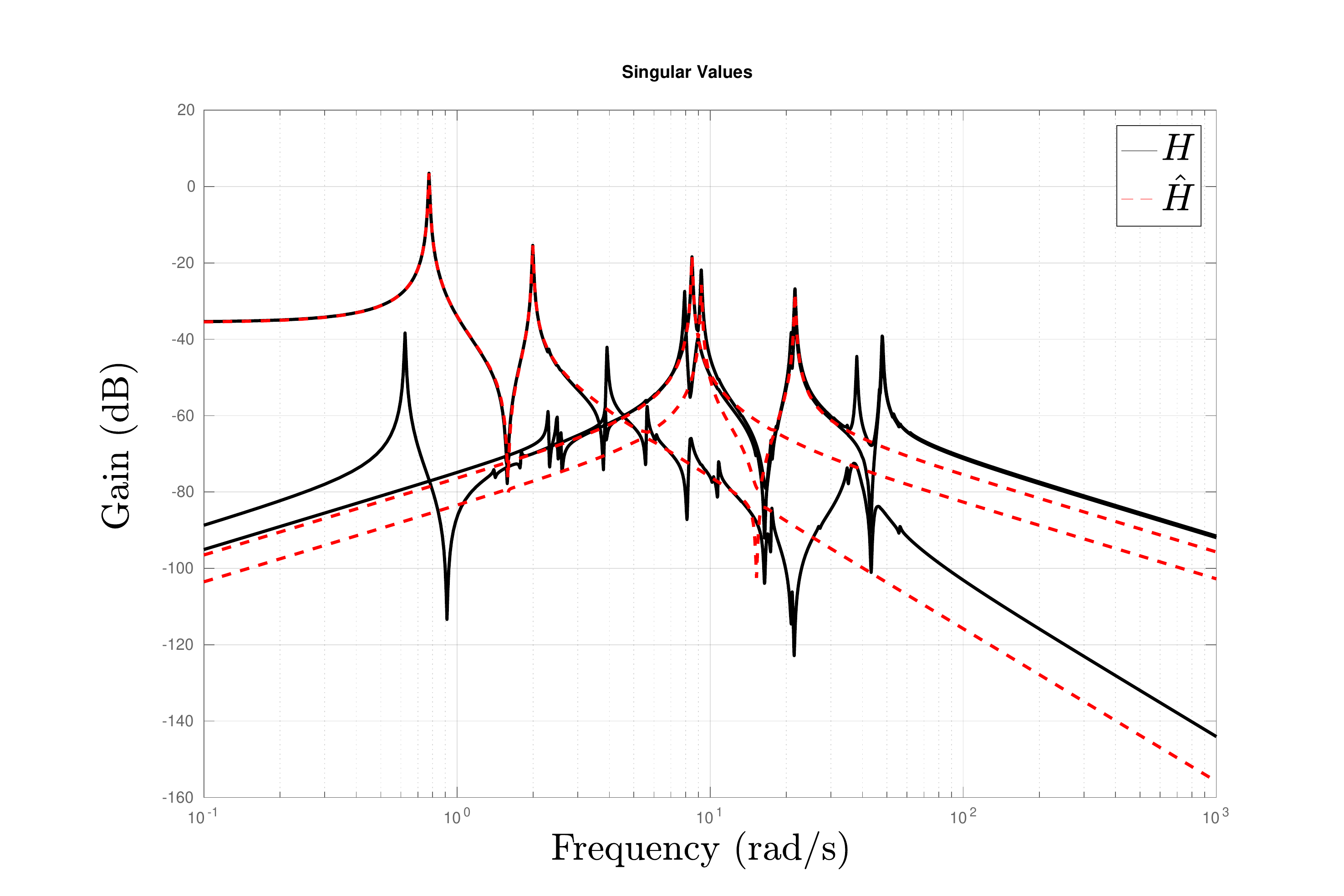}
    \caption{Singular values of the initial model $H$ and its $10$-th order $\mathcal{H}_2$ optimal modal truncation $\hat{H}$.}
    \label{fig:serious1}
\end{figure}

\section{Conclusion}
\label{sec:ccl}

This article revisits the well-known modal truncation technique from an optimisation point of view. In particular, dominant poles are defined as the solution of the associated optimal modal truncation problem  with respect to different systems norms. The latter reduces to a convex integer or mixed integer program depending on the considered norm: $\mathcal{H}_2$-norm and its frequency/time limited variants, $\mathcal{H}_\infty$-norm. 

The same approach may be developed for other norms or quantities of interest. For instance, one may be interested in knowing which poles are dominant in response to some specific input other than an impulse such as a step, a combination of sine, etc. This can give meaningful insights to determine the main dynamics in an automatic way for control design purposes.

In this article, the choice has been made to stick to the original modal truncation framework, i.e. the poles and the associated residues are kept fixed (excepted the term $D$). Therefore, the resulting model is only optimal among all the models with same residues and poles. To improve its matching with the initial large-scale model, the residues (one side in the MIMO case) should be considered as free variables. To avoid the multiplication of the binary variables with the residues in the objective, the optimisation problem may then be modified with the big-M technique such that each binary variable behaves as an activation variable through the constraint $\Vert \Phi_i \Vert_F \leq  \alpha_i M$ where $M$ is large enough. The problems then remain convex and may be solved similarly.

More generally, the underlying concept of this article consists in determining the dominant components of the additive decomposition of a LTI model. While the diagonal canonical form is considered here, the idea may be generalised to other decompositions. An interesting candidate is the block diagonal form (see e.g. the discussion in \cite{moler:1978:nineteen}). Indeed, the latter preserve the Jordan blocks and can thus be safely applied to systems with multiple eigenvalues. However this induces changes for the computation of the norms that must be integrated to the associated optimisation problems.


\bibliographystyle{plain}
\bibliography{biblio}

\end{document}